\newtheorem{thrm}{Theorem}[section]
\newtheorem{lem}[thrm]{Lemma}
\newtheorem{prop}[thrm]{Proposition}
\newtheorem{cor}[thrm]{Corollary}
\theoremstyle{definition}
\newtheorem{remark}[thrm]{Remark}
\newtheorem{ex}[thrm]{Example}
\numberwithin{equation}{section}
\newcommand{\bte}{\begin{theorem}\quad  }
\newcommand{\ete}{\end{theorem} }
\newcommand{\bpr}{\begin{prop}\quad  }
\newcommand{\epr}{\end{prop} }
\newcommand{\ble}{\begin{lemma}\quad }
\newcommand{\ele}{\end{lemma}}
\newcommand{\bco}{\begin{coro}\quad }
\newcommand{\eco}{\end{coro} }
\newcommand{\bex}{\begin{exa}\quad \rm }
\newcommand{\eex}{\end{exa} }
\newcommand{\bde}{\begin{defi}\quad \rm }
\newcommand{\ede}{\end{defi} }
\newcommand{\brm}{\begin{rem} \quad \rm}
\newcommand{\erm}{\end{rem} }
\newcommand{\bpf}{\begin{proof}[\bf{Proof.\quad}] \rm}
\newcommand{\epf}{ \end{proof}}
\newcommand{\bdm}{\begin{displaymath} }
\newcommand{\edm}{\end{displaymath} }
\newcommand{\be}{\begin{eqnarray*}}
\newcommand{\ee}{\end{eqnarray*}  }
\newcommand{\ba}{\begin{align*}}
\newcommand{\ea}{\end{align*}}
\newcommand\cupop{\mathop{\operator@font \cup}\nolimits}
\numberwithin{equation}{section}
\author{Mojtaba Sedaghatjoo}
\address{
Department of Mathematics, College of Sciences, Persian Gulf University, Bushehr, Iran.}
\author{Mohammad Ali Naghipoor $^*$}
\address{
Department of Mathematics, Jahrom University, PO Box 74135-111, Jahrom, Iran.}
\email{ma\underline{ }naghipoor@yahoo.com}
\thanks{$*$Corresponding author}
\keywords{act, injective act, indecomposable act}
\subjclass[2010]{20M50, 20M30}
\begin{document}

\title[Injectivity of Acts Based on Indecomposability]{An Approach to Injective Acts over Monoids Based on Indecomposability}

\begin{abstract}
The main purpose of this paper is investigating classes of acts that are injective relative to all embeddings with indecomposable domains or codomains. We give some homological classifications of monoids in light of such kinds of injectivity. Our approach to indecomposable property provides a new characterization  of right absolutely injective monoids as ones that all indecomposable acts are injective.
\end{abstract}
\maketitle

\section{\bf Introduction}
\noindent
Throughout this paper, unless otherwise stated, $S$ is a monoid in which $1$ denotes its identity element. A right $S-$act $A_S$ is a non-empty set on which $S$ acts unitarily. By an act or a $S-$act we simply mean a right $S-$act for a monoid $S.$ The one element act is called zero act and is denoted by $\Theta_S$. As for each monoid $S$ we can associate the monoid $S^0$ with a zero element, for each right $S$-act $A$ we can associate the right $S$-act $A^{\theta}_{S}$ with a zero element as below: \[ A^{\theta}_{S} = \begin{cases} A_S & \text{if } A_S \text{ contains a zero element,} \\ A_S \cup \Theta & \text{otherwise.}   \end{cases} \]

A monoid $S$ is called \textit{left reversible} if every two right ideals of $S$ have a non-empty intersection, that is, $aS\cap bS\neq\emptyset,$ for each $a, b\in S.$

We mean by $A\sqcup B$ the disjoint union of  sets $A$ and $B$. A right $S$-act $A$ is called \textit{decomposable} provided that there exist subacts $B, C\subseteq A$ such that $A=B \cup C$ and $B \cap C= \emptyset$. In this case $A=B \cup C$ is called a decomposition of $A$. Otherwise $A$ is called an \textit{indecomposable} right $S-$act. For a nonempty set $I$, $S^I$ denotes the set $\prod \limits _I S$, endowed with the natural componentwise right $S$-action ${(s_i)}_{i\in I}s={(s_is)}_{i\in I}$.
\begin{remark} \label{ind defn}
It is well-known that every $S$-act $A$ has a unique decomposition into indecomposable subacts. Indeed, indecomposable components of $A_S$ are precisely the equivalence classes of the relation $\sim $ on $A_S$  defined by $a\sim b$ if there exist $s_1,s_2, \ldots ,s_n,t_1,t_2, \ldots ,t_n \in S,~a_1,a_2, \ldots , a_n \in A_S$ such that
\[ a=a_1s_1,~ a_1t_1=a_2s_2,~ a_2t_2=a_3s_3, \ldots,~a_nt_n=b \] (see \cite{Ren}).  Therefore, elements $a,b\in A_S$ are in the same indecomposable component if and only if there exists a sequence of equalities of length $n$  as above connecting $a$ to $b$.
In the Proposition \ref{pr6} we will show that for left reversible monoids the length of the foregoing sequence can be taken 2. Note that from this aspect it is straightforward to show that homomorphic images of indecomposable acts are indecomposable (see \cite[Lemma 1.5.36] {kilp}).
\end{remark}
Recall that for a nonempty set $I$, $I^S$ is an \textit{$|I|$-cofree right $S$-act} where $fs$ for $f\in I^S, ~s\in S$ is defined by $fs(t)=f(st)$ for every $t\in S$. It should be mentioned that the 1-cofree object or terminal object in {\bf Act-}$S$ is the one element act  which is indecomposable.

A right $S-$act $A$ is called an \textit{injective right $S-$act}, if for any $S-$act $B,$ any subact $C$ of $B$ and any homomorphism $f\in \text{Hom}(C,A),$ there exists a homomorphism $\overline{f}\in \text{Hom}(B,A)$ extending $f,$ that is, $\overline{f}\mid_C=f.$


A monoid $S$ over which all right $S-$acts are injective is called an \textit{absolutely (completely) right injective monoid}. The study of such monoids was initiated by A. Skornjakov in \cite{Skornjakov69} and  E. Feller and L. Gantos in \cite{Feller1},\cite{Feller2} and \cite{Feller3}, though J. Fountaim in \cite{Foun} presented an account on characterizing such monoids in general situations. A later work in this issue was done by K. Shoji in \cite{Shoji}. Injectivity of acts over monoids for special classes of homomorphisms and for special classes of embeddings has been studied by many authors that leads to different weakened forms of injectivity. For instance, \cite{Ebrahimi}, \cite{Gould}, \cite{Zhang} and \cite{Zhang2} are papers ranging over these problems. By Skornjakov-Baer criterion, for checking injectivity of any right act with a zero element, it suffices to consider all inclusions into \textbf{cyclic} acts in diagram \textbf{(I)}. Besides if $C$ in the above diagram is cyclic then $A$ is called \textit{C-injective}, which is studied in \cite{Zhang}. If in addition $B$ is cyclic in diagram \textbf{(I)}, then $A$ is called \textit{CC-injective} which is introduced in \cite{Zhang2}. They study conditions under which all cyclic acts are injective. As every cyclic act is indecomposable and also every act is decomposed into indecomposable ones, it seems reasonable to ask whether similar results hold whenever $C_S$ or $B_S$ are indecomposable in diagram \textbf{(I)}. This paper is devoted to investigating the issue for classes of monomorphisms with indecomposable domains and codomains by which we lead to an approach to absolutely right injective monoids. 

To introduce the new kinds of injectivity we follow diagram (I) of injectivity and define the following notions of injectivity. 

\begin{enumerate}[(i)]
\item  A right $S-$act $A$ is called \textit{indecomposable codomain injective} or \textit{InC-injective} for short, if it is injective relative to all embeddings into indecomposable acts. 

\item A right $S-$act $A$ is called \textit{indecomposable domain injective} or \textit{InD-injective}, if it is injective relative to all embeddings from indecomposable acts. 

\item A right $S-$act $A$ is called  \textit{pseudo indecomposable domain injective} or \textit{PInD-injective} for short, if for any right $S-$act $B,$ any indecomposable right subact $C$ of $B$ and any \textbf{monomorphism} $f\in \text{Hom}(C,A),$ there exists a homomorphism $\overline{f}\in \text{Hom}(B,A)$ which extends $f,$ that is, $\overline{f}\mid_C=f.$


\end{enumerate}

We prove that InC-injective acts are those ones which are injective relative to all embeddings into cyclic acts. Besides, Theorem \ref{InC-inj+0=inj} states that injective acts are the same InC-injective acts with zero. As cyclic acts are indecomposable, we have the implications:

$$\text{injectivity} \Longrightarrow \text{InD-injectivity} \Longrightarrow \text{C-injectivity}.$$

all are strict.

Since every indecomposable act can be embedded in an $S-$act with a zero, it is easy to see that every InD-injective (PInD-injective) right $S-$act contains a zero. In Theorem \ref{completely InD-injective monoid}, we give a characterization of monoids over which all acts are InD-injective (PInD-injective). We show that these monoids are the same as absolutely injective monoids.

The reader is referred to \cite{clifford} and \cite{kilp} for preliminary account on the needed notions and results related to act and semigroup theory.

\section{Indecomposable Codomain Injective Acts}

Regarding the main role of indecomposable acts in this paper we shall take them into consideration more precisely.

First note that if $A$ is an indecomposable right $S-$act contained in $\bigsqcup_{i\in I}B_i,$ a coproduct of a family of right $S-$acts $\{B_i\}_{i\in I},$ then $A\subseteq B_i,$  for some $i\in I.$

\begin{prop} \label{pr6}
Let $S$ be a left reversible monoid. Then a right $S-$act $A$ is indecomposable if and only if for any $a,a'\in A_S$ there exist $s,s' \in S$ such that $as=a's'$.
\end{prop}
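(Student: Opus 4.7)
The plan is to define an auxiliary relation on $A$ and show it coincides with the indecomposability relation $\sim$ of Remark \ref{ind defn} when $S$ is left reversible. Specifically, define $\equiv$ on $A$ by
\[ a \equiv a' \iff \exists\, s,s'\in S \text{ with } as = a's'. \]
The statement of the proposition then amounts to saying that for left reversible $S$, the indecomposable components of $A$ are precisely the $\equiv$-classes.

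The easy direction ($\Leftarrow$) does not use left reversibility. If any two elements of $A$ are $\equiv$-related, then applying the characterization in Remark \ref{ind defn} with the length-$2$ sequence
\[ a = a\cdot 1,\qquad a\cdot s = a'\cdot s',\qquad a'\cdot 1 = a', \]
(i.e.\ $a_1=a$, $a_2=a'$, $s_1=1$, $t_1=s$, $s_2=s'$, $t_2=1$) shows $a\sim a'$, so $A$ is a single $\sim$-class, hence indecomposable.

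For the harder direction ($\Rightarrow$), I would prove that $\equiv$ is a congruence-like relation whose equivalence classes are subacts, and then use indecomposability of $A$ to conclude there is only one class. Reflexivity (take $s=s'=1$) and symmetry are immediate. The crucial step is transitivity: suppose $au = bv$ and $bw = cx$. By left reversibility of $S$, there exist $p,q\in S$ with $vp = wq$; then
\[ a(up) = (au)p = (bv)p = b(vp) = b(wq) = (bw)q = (cx)q = c(xq), \]
so $a \equiv c$. Finally each $\equiv$-class is a subact because for any $a\in A$ and $s\in S$ one has $(as)\cdot 1 = a\cdot s$, giving $as \equiv a$. Hence $A$ decomposes as the disjoint union of its $\equiv$-classes, and indecomposability forces a single class, which is exactly the desired conclusion.

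The main obstacle is the transitivity of $\equiv$; everything else is formal. This is precisely the place where left reversibility is essential, since without the hypothesis $vS\cap wS\neq\emptyset$ one cannot ``splice'' two length-$1$ chains into one, and in general the chain length cannot be reduced below $n$.
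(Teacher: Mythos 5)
Your proposal is correct. It differs in packaging from the paper's proof, though the two share the same engine. The paper argues the forward direction by taking an arbitrary connecting chain $a=a_1s_1$, $a_1t_1=a_2s_2,\dots,a_nt_n=a'$ from Remark \ref{ind defn} and collapsing it inductively: left reversibility gives $s_1u_1=t_1v_1$, hence $au_1=a_2s_2v_1$, and so on until a length-one relation $au=a'v$ remains. Your transitivity computation for $\equiv$ is precisely one step of that collapse, but you deploy it differently: rather than inducting on chain length, you show $\equiv$ is an equivalence relation whose classes are subacts and then invoke the definition of decomposability directly (one class versus the union of the others). This makes your hard direction independent of the chain characterization in Remark \ref{ind defn}, which you only need for the easy direction; the paper's version, by contrast, immediately exhibits the "length can be taken to be $2$" refinement promised in that remark, since it literally shortens a given chain. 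Both are complete; the one small point you leave implicit, harmlessly, is that when there are at least two $\equiv$-classes one obtains a decomposition $A=B\cup C$ by taking $B$ to be a single class and $C$ the (nonempty) union of the remaining ones.
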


\begin{proof}
Let $S$ be a left reversible monoid. Suppose that $A_S$ is indecomposable and $a,a'\in A$. Using Remark \ref{ind defn}, there exists a sequence of equalities, connecting $a$ to $a'$, of the form \[ a=a_1s_1,-~ a_1t_1=a_2s_2,~ a_2t_2=a_3s_3, \ldots,~a_nt_n=a',\]  for $a_i\in A_S,~s_i,t_i\in S,~1\leqslant i \leqslant n $. Left reversibility of $S$ provides  $u_1,u_2 \in S$ such that $s_1u_1=t_1v_1,$ and, in consequence, $au_1=a_1s_1u_1=a_1t_1v_1=a_2s_2v_1$. Proceeding inductively, we get $u,v\in S$ providing $au=a_nt_nv=a'v$ as desired. The converse directly follows by Remark \ref{ind defn}.
\end{proof}

Considering the fact that a monoid $S$ is left reversible if and only if for any $u, u' \in S$ there exist $s,s' \in S$ such that $us=u's'$ Proposition \ref{pr6} says that indecomposable acts over such monoids play a counterpart role of $S$ for acts.

\begin{prop} \label{pr7}
For a monoid $S$ all subacts of indecomposable right $S$-acts are indecomposable if and only if $S$ is left reversible.
\end{prop}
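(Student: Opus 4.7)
The plan is to prove both directions using Proposition \ref{pr6} and the observation that $S$ itself is a cyclic (hence indecomposable) right $S$-act.

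For the ``if'' direction, suppose $S$ is left reversible and let $A$ be indecomposable with subact $B \subseteq A$. I would take arbitrary $b,b' \in B$ and, viewing them as elements of $A$, apply Proposition \ref{pr6} to produce $s,s' \in S$ with $bs = b's'$. Since $B$ is a subact, both $bs$ and $b's'$ lie in $B$, so the characterization of Proposition \ref{pr6} holds inside $B$, which forces $B$ to be indecomposable. This direction is essentially immediate from the previous proposition.

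For the ``only if'' direction, I would use the test act $S_S$, which is cyclic (generated by $1$) and therefore indecomposable, as noted earlier in the paper. Given any $u,u' \in S$, the union $uS \cup u'S$ is a subact of $S_S$, so by hypothesis it is indecomposable. If the intersection $uS \cap u'S$ were empty, then $uS \cup u'S = uS \sqcup u'S$ would be a genuine disjoint union of two nonempty subacts, contradicting indecomposability. Hence $uS \cap u'S \neq \emptyset$, producing $s,s' \in S$ with $us = u's'$, which is exactly left reversibility.

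I do not anticipate a serious obstacle; the main subtlety is simply recognizing that $S_S$ itself is the right test act and that finite unions of principal right ideals form subacts whose indecomposability is equivalent to nondisjointness of the summands. Once that observation is in place, both implications are one-line consequences of Proposition \ref{pr6} and the definition of left reversibility.
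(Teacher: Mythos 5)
Your proof is correct and follows essentially the same route as the paper: the ``only if'' direction tests indecomposability on the subact $uS\cup u'S$ of the indecomposable act $S_S$, and the ``if'' direction is the straightforward application of Proposition \ref{pr6} that the paper also invokes. No issues.
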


\begin{proof} Necessity. Let $a,b\in S$. Since $S$ is indecomposable, our assumption implies that $aS\cup bS$ is indecomposable and therefore $aS \cap bS \neq \emptyset$.

Sufficiency. This is a straightforward application of Proposition \ref{pr6}.
\end{proof}

The next proposition characterizes monoids over which non-zero cofree acts are decomposable.

\begin{prop} \label{pr8} For a monoid $S$ the following are equivalent.
\begin{itemize}
\item[{\rm (i)}] All non-zero cofree $S-$acts are decomposable,

\item[{\rm (ii)}] there is a non-zero decomposable cofree $S-$act,

\item[{\rm (iii)}] $S$ is left reversible.
\end{itemize}
\end{prop}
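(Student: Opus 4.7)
The plan is to close the cycle (i)$\Rightarrow$(ii)$\Rightarrow$(iii)$\Rightarrow$(i). The first step is essentially free: any set $I$ with $|I|\geq 2$ gives a non-zero cofree act $I^S$, which is decomposable by (i), so (ii) holds.

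For (iii)$\Rightarrow$(i), I would assume $S$ is left reversible and, for a non-zero cofree $I^S$ (so $|I|\geq 2$), pick distinct $i,j\in I$ and track the constant functions $c_i,c_j\in I^S$. The one-line computation $(c_is)(t)=c_i(st)=i=c_i(t)$ shows $c_i\cdot s=c_i$ for every $s\in S$ (and likewise for $c_j$); so if $I^S$ were indecomposable, Proposition~\ref{pr6} would produce $s,s'\in S$ with $c_i\cdot s=c_j\cdot s'$, i.e.\ $c_i=c_j$, contradicting $i\neq j$.

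For (ii)$\Rightarrow$(iii) I would argue by contrapositive: if $S$ is not left reversible, then every non-zero cofree $I^S$ must be indecomposable. Fix $a,b\in S$ with $aS\cap bS=\emptyset$, and for arbitrary $f,g\in I^S$ construct $\phi\in I^S$ by the rule $\phi|_{aS}=f|_{aS}$, $\phi|_{bS}=g|_{bS}$, with $\phi$ chosen arbitrarily on $S\setminus(aS\cup bS)$. Disjointness of $aS$ and $bS$ makes this prescription unambiguous, and well-definedness inside $aS$ is automatic because $\phi$ is defined as the restriction of the already-existing function $f$ (rather than via the parameter $t$), which sidesteps any failure of left-cancellability of $a$. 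A direct check then gives $(\phi\cdot a)(t)=\phi(at)=f(at)=(f\cdot a)(t)$ and similarly $\phi\cdot b=g\cdot b$, so by Remark~\ref{ind defn} $f\sim\phi\sim g$, and $I^S$ has a single indecomposable component.

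The main obstacle I anticipate is the construction of $\phi$ in (ii)$\Rightarrow$(iii): one has to sidestep left-cancellability concerns for $a$ and $b$, and it is the hypothesis $aS\cap bS=\emptyset$ that precisely licenses combining the two independent restrictions. Once $\phi$ is in hand, the remainder reduces to direct applications of Proposition~\ref{pr6} and Remark~\ref{ind defn}.
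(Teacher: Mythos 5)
Your proposal is correct and follows essentially the same route as the paper: the (iii)$\Rightarrow$(i) step via the constant functions $c_{x_1},c_{x_2}$ being fixed points (zeros) combined with Proposition~\ref{pr6}, and the (ii)$\Rightarrow$(iii) step via the hybrid function agreeing with $f$ on $aS$ and with $g$ on $bS$, yielding the chain $f=f\cdot 1$, $f\cdot a=\phi\cdot a$, $\phi\cdot b=g\cdot b$, $g\cdot 1=g$. The only cosmetic difference is that the paper's hybrid $h$ takes the value $g(x)$ on all of $S\setminus aS$ rather than leaving it arbitrary there; this changes nothing.
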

\begin{proof}
 $i\Longrightarrow ii$ is clear.

$ii\Longrightarrow iii$. By way of contradiction suppose that $aS \cap bS=\emptyset$ for some $a,b\in S$. Let $X^S$ be a non-zero decomposable $|X|$-cofree act and $f,g \in X^S$.  Let $h \in X^S$ be given by

$$h(x)=  \begin{cases} f(x) & \text{if } x\in aS, \\ g(x) &  \text{otherwise.}
\end{cases}$$

So we get the sequence $f=f.1,~fa=ha,~hb=gb,~g.1=g$, which implies that $f$ and $g$ are in the same indecomposable component. Therefore $X^S$ is indecomposable a contradiction.

$iii\Longrightarrow i$. Let $S$ be a left reversible monoid and $X^S$ be a non-zero cofree $S$-act. Take constant functions $f=c_{x_1}$ and $g=c_{x_2}$ in $X^S$ for different elements $x_1$ and $x_2$ in $X$. Then $f$ and $g$ are zero elements of $X^S$ (indeed zero elements of $X^S$ are the same constant functions). If $f$ and $g$ are in the same indecomposable component, in light of Proposition \ref{pr6}, there exist $a,b \in S$ such that $fa=gb$ and so $f=g$, a contradiction.
\end{proof}

\begin{lem}\label{le2}
If $Q_S=Q_1 \sqcup Q_2$ contains a zero element, then $Q_1^{\theta}$ is a retract of $Q_S$.
\end{lem}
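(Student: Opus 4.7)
The plan is to distinguish two cases according to which summand contains the zero of $Q_S$; since $Q_1 \cap Q_2 = \emptyset$, the zero of $Q_S$ lies in exactly one of them. In both cases I will produce the retraction by fixing $Q_1$ pointwise and collapsing all of $Q_2$ onto the (unique) zero of $Q_1^\theta$, which is well-defined precisely because $Q_1^\theta$ is designed to always possess a zero.

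\textbf{Case 1:} the zero of $Q_S$ lies in $Q_1$. Then $Q_1$ already has a zero element, so $Q_1^\theta = Q_1$. I would define $r : Q_S \to Q_1$ by $r|_{Q_1} = \mathrm{id}_{Q_1}$ and $r(q) = 0_{Q_1}$ for every $q \in Q_2$, and take the inclusion $Q_1 \hookrightarrow Q_S$ as the section. The only nontrivial check is that $r$ respects the $S$-action on $Q_2$; but since $Q_2$ is a subact, $qs \in Q_2$ for each $q \in Q_2,\, s \in S$, so $r(qs) = 0_{Q_1} = 0_{Q_1}\cdot s = r(q)s$.

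\textbf{Case 2:} the zero $0$ of $Q_S$ lies in $Q_2$. Here I would use $0$ itself to realize the zero of $Q_1^\theta$ inside $Q_S$. Define an embedding $\iota : Q_1^\theta \to Q_S$ by $\iota|_{Q_1}$ being the inclusion and $\iota$ sending the zero of $Q_1^\theta$ (whether a pre-existing zero of $Q_1$ or the adjoined element $\theta$) to $0 \in Q_2$; this is a homomorphism because $0$ is fixed by the $S$-action. For the retraction $r : Q_S \to Q_1^\theta$, set $r|_{Q_1} = \mathrm{id}_{Q_1}$ and let $r$ send every element of $Q_2$ to the zero of $Q_1^\theta$. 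The homomorphism check is identical to Case~1, and $r \circ \iota = \mathrm{id}_{Q_1^\theta}$ is immediate from the definitions.

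There is no real obstacle beyond bookkeeping; the only subtle point is to handle the sub-distinction inside Case~2 (whether $Q_1$ itself already contains a zero, so that $Q_1^\theta = Q_1$, or whether $\theta$ is freshly adjoined). In either sub-case the construction is the same, because by design $Q_1^\theta$ possesses a canonical zero onto which all of $Q_2$ can be collapsed, and this collapse is a homomorphism exactly because $Q_2$ is a subact.
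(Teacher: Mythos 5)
Your proof is correct and follows essentially the same route as the paper's: identify $Q_1^{\theta}$ with a subact of $Q_S$ by using the zero of $Q_S$ when $Q_1$ lacks one, and retract by fixing $Q_1$ and collapsing the rest onto that zero, which is a homomorphism because $Q_2$ is a subact. The paper merely writes this as a single uniform formula where you separate the cases explicitly; the content is identical.
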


\begin{proof}
Since $Q_S$ contains a zero element as $\theta$, $Q_1^{\theta} \subseteq Q_S$. Now for the epimorphism $f: Q_S \to Q_1^{\theta}$ given by

$$f(x)=\begin{cases} x & \text{if } x\in Q_1^{\theta} \\ \theta & \text{otherwise.} \end{cases}$$

we get $f \circ i=\text{id}_{Q_1^{\theta}}$ where $i$ is the inclusion map from $Q_1^{\theta}$ to $Q_S$.
\end{proof}

The next result immediately follows by the above lemma.
\begin{lem}\label{le3}
If $Q=Q_1 \sqcup Q_2$ is an injective $S-$act, then $Q_1^{\theta}$ is injective too.
\end{lem}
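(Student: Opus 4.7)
The plan is to chain together two standard facts with Lemma \ref{le2}. First I would verify that the hypothesis of Lemma \ref{le2} is automatically met: every injective $S$-act has a zero element. This is a classical observation, since any $S$-act $A$ embeds into $A^{\theta}$, and injectivity of $A$ forces the inclusion to split, making $A$ a retract of something with a zero (and in fact the retraction transports the zero back into $A$). So from $Q$ being injective we get a zero $\theta \in Q$, and the decomposition $Q = Q_1 \sqcup Q_2$ puts $\theta$ in exactly one of the two summands; without loss of generality we may regard $\theta$ as adjoined on the $Q_1$ side, giving meaning to $Q_1^{\theta} \subseteq Q$.

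With the zero element in hand, Lemma \ref{le2} applies directly and exhibits $Q_1^{\theta}$ as a retract of $Q$: there is an inclusion $i \colon Q_1^{\theta} \hookrightarrow Q$ and a homomorphism $f \colon Q \to Q_1^{\theta}$ with $f \circ i = \mathrm{id}_{Q_1^{\theta}}$.

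The final step is the standard fact that a retract of an injective act is injective. Given any embedding $C \hookrightarrow B$ and a homomorphism $g \colon C \to Q_1^{\theta}$, compose with $i$ to get $i \circ g \colon C \to Q$, extend this to $\bar{h} \colon B \to Q$ by injectivity of $Q$, and then $f \circ \bar{h} \colon B \to Q_1^{\theta}$ is the desired extension of $g$, since $f \circ \bar{h} \circ \iota_C = f \circ i \circ g = g$. This is a one-line verification and essentially the entire proof.

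The only real content is the first observation about zeros in injective acts; everything else is immediate from Lemma \ref{le2} and the retract argument, which is why the authors write that the result "immediately follows." There is no genuine obstacle here; the proof is a concatenation of known facts arranged to invoke the preceding lemma.
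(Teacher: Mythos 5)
Your proof is correct and follows exactly the route the paper intends: it supplies the one missing ingredient (every injective act has a zero element, via extending $\mathrm{id}_Q$ along $Q\hookrightarrow Q^{\theta}$), then applies Lemma \ref{le2} to exhibit $Q_1^{\theta}$ as a retract of $Q$, and concludes by the standard fact that retracts of injective acts are injective. This is precisely why the authors state that the result ``immediately follows by the above lemma,'' so there is nothing further to compare.
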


 In view of the foregoing lemma,  we conclude that  if $Q=Q_1 \sqcup Q_2$ is an injective $S-$act, then either $Q_1$ or $Q_2$ is injective. 

The next proposition gives a characterization of InC-injective acts which proves those are very close to injective acts.

\begin{thrm} \label{InC-inj+0=inj}
Let $S$ be a monoid. An $S-$act $Q$ is InC-injective if and only if $Q^{\theta}_{S}$ is injective.
\end{thrm}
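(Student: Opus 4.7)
The plan is to prove both directions of the biconditional, with the substantive work in the forward direction. For $(\Leftarrow)$, assuming $Q^\theta$ is injective, I take a homomorphism $f \colon C \to Q$ with $C$ a subact of an indecomposable $B$, compose with $Q \hookrightarrow Q^\theta$, and extract an extension $\bar f \colon B \to Q^\theta$ from injectivity. The preimages $\bar f^{-1}(Q)$ and $\bar f^{-1}(\{\theta\})$ partition $B$ into subacts (using that $Q$ is $S$-closed inside $Q^\theta$ and $\theta$ is a zero); the first is nonempty because it contains $C$, so indecomposability of $B$ forces the second to be empty, meaning $\bar f$ lands in $Q$ as required.

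For the forward direction, if $Q$ already has a zero then $Q = Q^\theta$, and the Skornjakov-Baer criterion (which reduces injectivity to extensions from subacts of cyclic, hence indecomposable, codomains) promotes InC-injectivity directly to injectivity. So from here on I assume $Q$ has no zero, with $Q^\theta = Q \cup \{\theta\}$; my plan is to prove $Q^\theta$ is itself InC-injective, after which the same Skornjakov-Baer step (available because $Q^\theta$ now carries a zero) completes the proof.

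The key claim is that every homomorphism $h \colon C \to Q^\theta$ with $C$ a subact of an indecomposable $B$ must have image entirely in $Q$ or entirely equal to $\{\theta\}$. Granted this, extending $h$ is immediate: apply InC-injectivity of $Q$ in the first case, or use the constant $\theta$ map in the second. To prove the claim, I set $C_1 = h^{-1}(\theta)$ and $C_2 = h^{-1}(Q)$ and assume both nonempty. First, $C_2$ is a subact of $B$: if some $c_2 s$ landed in $C_1$ then $h(c_2) \cdot s = \theta$, impossible when $Q$ has no zero. I then pass to the Rees quotient $\pi \colon B \to B/C_1$, which is indecomposable by the remark in the introduction and carries a zero $\star$ coming from the collapsed class. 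Since $C_2$ survives as a subact of $B/C_1$, InC-injectivity of $Q$ extends $h|_{C_2}$ to some $g \colon B/C_1 \to Q$; but then $\star \cdot s = \star$ forces $g(\star) \cdot s = g(\star)$ for every $s \in S$, exhibiting $g(\star)$ as a zero of $Q$ and contradicting the no-zero hypothesis.

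The main obstacle is spotting this Rees-quotient construction. A direct attempt---apply InC-injectivity of $Q$ to $C_2 \hookrightarrow B$ and then redefine the extension to send $C_1$ to $\theta$---fails as a homomorphism at any $b \in B \setminus C_1$ with $bs \in C_1$: one would need $g(b) \cdot s = \theta$, but the $Q$-valued extension keeps $g(b) \cdot s$ in $Q$. Collapsing $C_1$ to a single zero in $B/C_1$ absorbs exactly that mismatch, and the absence of a zero in $Q$ is what supplies the crucial contradiction.
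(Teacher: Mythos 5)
Your proof is correct, and the backward direction coincides with the paper's. In the forward direction you and the paper share the same skeleton --- dispose of the case where $Q$ has a zero via the Skornjakov--Baer criterion, and otherwise show that a homomorphism from a subact of an indecomposable act into $Q^{\theta}=Q\sqcup\Theta$ cannot have image meeting both pieces --- but you establish that dichotomy by a genuinely different route. The paper first proves that $S$ must be left reversible (by extending $i\mapsto qi$ over a Rees quotient $S/J$ and manufacturing a zero in $Q$), and then invokes Proposition \ref{pr7}, by which subacts of indecomposable acts over left reversible monoids are indecomposable; indecomposability of the domain then forces its image into $Q$ or $\Theta$. You instead argue locally: given $h\colon C\to Q^{\theta}$ with $C_1=h^{-1}(\theta)$ and $C_2=h^{-1}(Q)$ both nonempty, you pass to the Rees quotient $B/C_1$, which is indecomposable and has a zero, extend $h|_{C_2}$ into $Q$ by InC-injectivity, and read off a zero of $Q$ from the image of the collapsed class --- the same contradiction mechanism the paper uses to get left reversibility, but applied once, directly, to the general extension problem. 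Your route bypasses Proposition \ref{pr7} and the detour through left reversibility entirely, and in fact proves the slightly stronger intermediate statement that $Q^{\theta}$ is InC-injective with respect to arbitrary indecomposable codomains rather than only cyclic ones; the paper's route is shorter given its earlier structural results and records the useful byproduct that a monoid admitting a zero-free InC-injective act is necessarily left reversible.
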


\begin{proof}
First suppose that $Q$ is InC-injective right $S-$act. If $Q_S$ contains a zero it is injective in view of Skornjakov-Baer criterion for acts (\cite[Theorem 1]{Skornjakov69}) and there is nothing to prove. Otherwise, $Q_S$  contains no zero and hence $S$ is left reversible. Indeed, if $S$ is not left reversible there exist right ideals $I$ and $J$ in $S$ for which $I\cap J=\emptyset$. Now consider the following diagram
\begin{center}
\begin{tikzcd}
I \arrow{r}{\subseteq} \arrow{d}[swap]{f}
&S/J \\
Q &
\end{tikzcd}
\end{center}
where $S/J$ is the Rees factor act of $S_S$ by the right ideal $J$ and $f:I\longrightarrow Q_S$ is given by $f(i)=qi,i\in I$ for a fixed element $q\in Q_S$. Now our assumption provides a homomorphism $\bar{f} \in \text{Hom}(S/J, Q)$ commuting the above diagram which yields a zero in $Q_S$, a contradiction. Now regarding Skornjakov-Baer criterion for injective acts, consider the following diagram
 \begin{center}
\begin{tikzcd}
A \arrow{r}{\subseteq} \arrow{d}[swap]{f}
&B=S/\rho \\
Q^{\theta} &
\end{tikzcd}
\end{center}

where $\rho$ is a right congruence on $S$, $B$ is a subact of a cyclic $S-$act $B=S/\rho$ and $f$ is a homomorphism.  Since $Q_S$ contains no zero $Q_S^{\theta}=Q_S\cup \Theta$. As $S$ is left reversible and $S/\rho$ is indecomposable, $A_S$ is indecomposable by Proposition \ref{pr7} and thus ${\rm{Im}}f\subseteq Q$ or ${\rm{Im}}f \subseteq \Theta$. Therefore in both cases by our assumption we have desired morphism for making the diagram commutative.

Conversely, suppose that $Q^{\theta}_{S}$ be an injective act and consider the following diagram,
\begin{center}
\begin{tikzcd}
A \arrow{r}{\iota} \arrow{d}[swap]{f}
&B \\
Q &
\end{tikzcd}
\end{center}
where $\iota$ is an embedding into the indecomposable act $B_S$ and $f$ is a homomorphism. If $Q_S$ contains a zero element then $Q_S^{\theta}=Q_S$ and there is nothing to prove. Suppose that $Q_S$ does not have any zero and consider the injective $S-$act $Q \cup {\Theta}$. Since $Q \cup {\Theta}$ is injective there exists a homomorphism $\bar{f} \in \text{Hom}(B, Q^{\theta})$ commuting the following diagram.
\begin{center}
\begin{tikzcd}
A \arrow{r}{\iota} \arrow{d}[swap]{f}
&B \\
Q^{\theta} &
\end{tikzcd}
\end{center}
Since $B_S$ is indecomposable ${\rm{Im}}\bar{f} \subseteq \Theta$ or  ${\rm{Im}}\bar{f} \subseteq Q$. Now from $\bar{f}\iota=f$ and ${\rm{Codom}}f=Q$ we observe that the second case occurs. So $Q$ is an InC-injective $S-$act.
\end{proof}

Since the right $S-$act $S$ is indecomposable, as a result of the above proposition we reach the following corollary.

\begin{cor} \label{co4}
Every InC-injective $S-$act is weakly injective.
\end{cor}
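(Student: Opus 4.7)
The plan is to invoke InC-injectivity directly, using the fact that the regular right act $S_S$ is itself indecomposable. Recall that a right $S$-act $Q$ is called \emph{weakly injective} if every homomorphism $f : I \to Q$ from a right ideal $I$ of $S$ extends to a homomorphism $\bar f : S \to Q$ along the inclusion $I \hookrightarrow S$. So the task is exactly to exhibit such an extension whenever $Q$ is InC-injective.

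First I would observe that $S_S$ is indecomposable: it is cyclic, generated by $1$, and any homomorphic image of a cyclic (hence indecomposable) act is indecomposable (as noted in Remark \ref{ind defn}). Therefore, for any right ideal $I$ of $S$, the inclusion $\iota : I \hookrightarrow S$ is an embedding into an indecomposable right $S$-act.

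Now given a homomorphism $f : I \to Q$ with $Q$ an InC-injective right $S$-act, the diagram
\begin{center}
\begin{tikzcd}
I \arrow{r}{\iota} \arrow{d}[swap]{f}
& S \\
Q &
\end{tikzcd}
\end{center}
falls exactly within the defining class of InC-injectivity, since $\iota$ is an embedding into the indecomposable act $S_S$. Hence there exists $\bar f \in \mathrm{Hom}(S,Q)$ with $\bar f \circ \iota = f$, which is precisely what weak injectivity of $Q$ requires. No real obstacle arises; the corollary is essentially a direct specialization of InC-injectivity to the embedding $I \hookrightarrow S$, and the only ingredient beyond the definition is the indecomposability of $S_S$.
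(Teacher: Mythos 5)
Your proof is correct and matches the paper's own justification, which likewise rests on the single observation that $S_S$ is indecomposable (being cyclic), so that the inclusion of a right ideal into $S$ falls under the defining class of embeddings for InC-injectivity. Nothing further is needed.
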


The next corollary gives an analogous version of Skornjakov-Baer criterion for InC-injective acts.

\begin{cor} \label{co5}
A right $S$-act $Q$ is InC-injective if and only if it is injective relative to all inclusions into cyclic acts.
\end{cor}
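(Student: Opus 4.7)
The forward direction is immediate from the definition since every cyclic act is indecomposable. For the converse, my plan is to reduce to Theorem \ref{InC-inj+0=inj}: it suffices to show that $Q^{\theta}$ is injective. Because $Q^{\theta}$ carries a zero, the Skornjakov--Baer criterion reduces the task to extending an arbitrary homomorphism $f:A\to Q^{\theta}$ along an inclusion $A\hookrightarrow B=S/\rho$ into a cyclic act.

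If $Q$ already contains a zero then $Q^{\theta}=Q$ and the hypothesis delivers the extension directly. Otherwise, I would first rerun the left-reversibility argument from the forward half of the proof of Theorem \ref{InC-inj+0=inj}, now with the weaker hypothesis at hand: supposing toward contradiction that $I\cap J=\emptyset$ for right ideals $I,J$ of $S$, fix $q\in Q$ and apply the cyclic-inclusion hypothesis to the diagram $I\hookrightarrow S/J$ with map $i\mapsto qi$; the image of the class $[J]$ under the extension must then be a zero of $Q$, contradicting that $Q$ has none. With $S$ left reversible, Proposition \ref{pr7} forces every subact of the indecomposable act $B$ to be indecomposable, so $A$ is indecomposable. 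Consequently $f(A)$ is an indecomposable subact of $Q^{\theta}=Q\sqcup\Theta$, which lands entirely in $Q$ or entirely in $\Theta$. In the first case the hypothesis gives an extension $\bar f:B\to Q\subseteq Q^{\theta}$; in the second, the constant map $B\to\Theta$ works.

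The main obstacle is exactly the one the left-reversibility step handles: without indecomposability of $A$ one cannot prevent $f$ from mixing $Q$-values and $\Theta$-values, in which case the hypothesis (whose target is $Q$, not $Q^{\theta}$) is insufficient to produce a single extension agreeing with $f$ on both pieces. Everything else is essentially bookkeeping through Theorem \ref{InC-inj+0=inj} and Skornjakov--Baer.
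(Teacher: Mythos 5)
Your proposal is correct and matches the paper's argument: the paper likewise disposes of the forward direction by noting cyclic acts are indecomposable, and for the converse reduces to Theorem \ref{InC-inj+0=inj} by rerunning its necessity argument (left reversibility via $I\hookrightarrow S/J$, then Proposition \ref{pr7} to make the subact $A$ indecomposable so that $f(A)$ lies wholly in $Q$ or in $\Theta$). You have merely written out explicitly the steps the paper leaves as ``the same way as the proof of the necessity part of Theorem \ref{InC-inj+0=inj}.''
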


\begin{proof}
As cyclic acts are indecomposable, the necessity part follows.

Conversely, suppose that $Q_S$ is injective relative to all inclusions into cyclic acts. In light of Theorem \ref{InC-inj+0=inj}, we should just prove that $Q^{\theta}$ is an injective $S-$act. It can be proved by the same way as the proof of the necessity part of Theorem \ref{InC-inj+0=inj}.
\end{proof}

\begin{remark}
Note that InC-injective acts in the class of non-injective acts are the closest ones to their envelopes. Besides, thanks to Lemma \ref{le3}, indecomposable components of injective acts are InC-injective, that is, injective acts are unions of InC-injective acts. The next example shows that the implication $injective \Longrightarrow InC-injective$ is strict.
\end{remark}

\begin{ex} \label{ex3}
Let $S$ be the free word monoid over $\{x\}$. Put $I=\{0,1\}$. Thus $I^S$ is a cogenerator. Set
$$ B=\{f\in I^S|~ fs ~~ \text{ is a zero element for some } s\in S \} $$
and \bdm A=\{f\in I^S| ~fs \text{ is not a zero element for each} ~s\in S\}. \edm Notice that all zero elements of $I^S$ belong to $B$. On the other hand  the mapping $f:S\longrightarrow I$  given by
$$f(x^i)= \left\{ \begin{array}{ll} 0 & \textrm{if $i$ is even}\\ 1 & \textrm{if $i$ is odd} \end{array} \right. $$
is an element of $I^S$ such that $fs$ is not a zero element for each $s\in S$. Hereby, $I^S$ is a disjoint union of nonempty sets $A$ and $B$. Since $S$ is commutative both $A$ and $B$ are subacts of $I^S$ for which $A$ does not have a zero element. Thus $A$ is a InC-injective $S-$act which is not injective.
\end{ex}

\begin{prop}\label{prod & coprod & retract of InC-injective}
For a monoid $S$ we have the following.
\begin{itemize}
\item[{\rm (i)}] Any retract of an InC-injective $S-$act is also InC-injective.


\item[{\rm (ii)}] All coproducts of InC-injective right $S-$acts are InC-injective if and only if $S$ is left reversible.
\end{itemize}
\end{prop}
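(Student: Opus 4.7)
For part (i), I would give a direct retract argument, bypassing the characterization in Theorem \ref{InC-inj+0=inj} (which is awkward here because the passage $Q\mapsto Q^\theta$ does not preserve retracts cleanly when zeros are or aren't already present). Let $R$ be a retract of an InC-injective act $Q$, with retraction $r\colon Q\to R$ and section $j\colon R\to Q$ satisfying $r\circ j=\mathrm{id}_R$. Given an embedding $\iota\colon A\hookrightarrow B$ with $B$ indecomposable and a homomorphism $f\colon A\to R$, form $j\circ f\colon A\to Q$, apply InC-injectivity of $Q$ to obtain $g\colon B\to Q$ with $g\circ \iota=j\circ f$, and set $\bar f:=r\circ g\colon B\to R$. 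Then $\bar f\circ\iota=r\circ j\circ f=f$, as required.

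For part (ii), sufficiency is essentially a repackaging of Proposition \ref{pr7}. Assume $S$ is left reversible, let $\{Q_i\}_{i\in I}$ be a family of InC-injective acts, set $Q=\bigsqcup_{i\in I}Q_i$, and consider a diagram $\iota\colon A\hookrightarrow B$, $f\colon A\to Q$, with $B$ indecomposable. By Proposition \ref{pr7}, the subact $A\subseteq B$ is again indecomposable, so (by the observation made just before Proposition \ref{pr6} about indecomposable subacts of coproducts) $f(A)\subseteq Q_{i_0}$ for some $i_0\in I$. Viewing $f$ as a map into $Q_{i_0}$ and using the InC-injectivity of $Q_{i_0}$, I obtain an extension $\bar f\colon B\to Q_{i_0}\subseteq Q$, finishing this direction.

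For necessity I would argue by contrapositive, and the heart of the matter is choosing the right witness. The one-element act $\Theta$ is (the terminal object, hence) injective and therefore InC-injective, so $\Theta\sqcup\Theta$ is a coproduct of InC-injective acts; I will show it fails to be InC-injective whenever $S$ is not left reversible. Write its points as $\theta_1,\theta_2$. Suppose $aS\cap bS=\emptyset$ for some $a,b\in S$. The subact $aS\cup bS$ of the indecomposable act $S$ decomposes as $aS\sqcup bS$, so the rule $f(aS)=\{\theta_1\}$, $f(bS)=\{\theta_2\}$ defines a homomorphism $f\colon aS\cup bS\to\Theta\sqcup\Theta$. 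Any extension $\bar f\colon S\to\Theta\sqcup\Theta$ would satisfy $\bar f(s)=\bar f(1)\cdot s=\bar f(1)$ for every $s\in S$ (since both $\theta_1,\theta_2$ are fixed by $S$), so $\bar f$ would be constant and could not agree with $f$ on both $a$ and $b$.

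The one step requiring real thought is locating a coproduct of InC-injective acts that detects failure of left reversibility; after that, everything is bookkeeping. I expect no technical obstacles beyond identifying $\Theta\sqcup\Theta$ as the natural test object and invoking Proposition \ref{pr7} on the sufficiency side.
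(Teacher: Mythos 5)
Your proof is correct and follows essentially the same route as the paper, which simply defers to the standard injectivity arguments (\cite[Propositions 1.7.30 and 3.1.13]{kilp}): the retract argument for (i), reduction to a single component via Proposition \ref{pr7} for sufficiency in (ii), and the test object $\Theta\sqcup\Theta$ against the embedding $aS\cup bS\hookrightarrow S$ for necessity. You have merely written out the details the paper leaves implicit, and they check out.
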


\begin{proof}
The proof is the same as for injectivity (see for example \cite{kilp},  Proposition 1.7.30 and Proposition 3.1.13).
\end{proof}


\begin{prop} \label{pr4}
For a monoid $S$ the following are equivalent.
\begin{itemize}
\item[{\rm (i)}] All coproducts of injective acts are injective,
\item[{\rm (ii)}]  for some injective acts $A_S$ and $B_S$, $A_S \coprod B_S$ is (weakly) injective,
\item[{\rm (iii)}] for some acts $A_S$ and $B_S$, $A_S \coprod B_S$ is (weakly) injective, that is, there exists a decomposable (weakly) injective act,
\item[{\rm (iv)}]  $S$ is left reversible,
\item[{\rm (v)}]  subacts of indecomposable acts are indecomposable,
\item[{\rm (vi)}]  subacts of indecomposable injective acts are indecomposable,
\item[{\rm (vii)}]  any indecomposable injective $S-$act contains only one zero.
\end{itemize}
\end{prop}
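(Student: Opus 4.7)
My plan is to prove the equivalence by two cycles of implications meeting at condition (iv). Concretely, I would establish $(i)\Rightarrow(ii)\Rightarrow(iii)\Rightarrow(iv)\Rightarrow(i)$ and, in parallel, $(iv)\Leftrightarrow(v)\Rightarrow(vi)\Rightarrow(vii)\Rightarrow(iv)$. The implications $(i)\Rightarrow(ii)$ (take, e.g., $A=B=\Theta_S$, both injective), $(ii)\Rightarrow(iii)$, and $(v)\Rightarrow(vi)$ are immediate. The equivalence $(iv)\Leftrightarrow(v)$ is exactly Proposition \ref{pr7}, and $(iv)\Rightarrow(i)$ is the classical fact from \cite{kilp} that coproducts of injective acts over a left reversible monoid are injective.

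The substantive step of the first cycle is $(iii)\Rightarrow(iv)$. Assuming $A\sqcup B$ is weakly injective with both summands nonempty, I would argue by contradiction from $aS \cap bS = \emptyset$ for some $a,b\in S$. Picking $x \in A$ and $y \in B$, define $f:aS \cup bS \to A \sqcup B$ by $f(as)=xs$ and $f(bs)=ys$; disjointness of the two right ideals ensures well-definedness, and $f$ is an $S$-homomorphism. Weak injectivity provides an extension $\bar f: S\to A\sqcup B$. Now $\bar f(S)$ is a homomorphic image of the indecomposable act $S$, hence indecomposable, and by the observation recorded just before Proposition \ref{pr6} it must lie entirely in $A$ or entirely in $B$; this contradicts $\bar f(a) = x\in A$ and $\bar f(b) = y\in B$.

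For the second cycle, $(vi)\Rightarrow(vii)$ is obtained by observing that if an indecomposable injective act $Q$ had distinct zeros $\theta_1,\theta_2$, then $\{\theta_1,\theta_2\}$ would be a subact decomposing as $\{\theta_1\}\sqcup\{\theta_2\}$, contradicting (vi). The crucial step is $(vii)\Rightarrow(iv)$, which I would prove contrapositively. If $S$ is not left reversible, then by Proposition \ref{pr8} no non-zero cofree act is decomposable; choosing $X$ with $|X|=2$, the non-zero cofree act $X^S$ is indecomposable, it is injective, and the two distinct constant maps $c_{x_1},c_{x_2}$ are zero elements of $X^S$, contradicting (vii).

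I do not anticipate a serious obstacle, since the essential structural facts (Propositions \ref{pr7} and \ref{pr8}, Remark \ref{ind defn}, and the classical preservation result from \cite{kilp}) are all already in place. The point that requires the most care is in $(iii)\Rightarrow(iv)$: verifying well-definedness of $f$ on $aS\cup bS$ and then invoking indecomposability of the homomorphic image of $S$ to force the extension into a single summand. Everything else is either a direct reference or a one-line observation about zeros.
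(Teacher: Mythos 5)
Your overall route coincides with the paper's: the same two cycles meeting at (iv), the same use of Proposition \ref{pr7} for (iv)$\Leftrightarrow$(v), the same two-zero subact argument for (vi)$\Rightarrow$(vii), and the same appeal to Proposition \ref{pr8} together with injectivity of cofree acts for (vii)$\Rightarrow$(iv). The one step that does not work as written is the definition of the test map in (iii)$\Rightarrow$(iv). You set $f(as)=xs$ and $f(bs)=ys$; this prescribes the value of $f$ in terms of a chosen representation of an element of $aS\cup bS$, and disjointness of $aS$ and $bS$ only rules out a clash between the two clauses. It does not rule out $as=as'$ with $xs\neq xs'$, and that can genuinely happen: if, say, $a$ is a left zero of $S$, then $as=as'$ for all $s,s'\in S$, while a generic $x\in A$ need not satisfy $xs=xs'$. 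So your $f$ need not be well defined, and the well-definedness issue you flagged is exactly the one your argument does not address.

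The repair is what the paper does: define $f$ on the elements of the ideals themselves, $f(u)=xu$ for $u\in aS$ and $f(v)=yv$ for $v\in bS$ (equivalently $f(as)=x(as)$, not $xs$). This is manifestly well defined and is an $S$-homomorphism. The rest of your argument then goes through unchanged: the extension $\overline{f}:S\to A\coprod B$ has indecomposable image (homomorphic image of the indecomposable act $S$) meeting both $A$ at $\overline{f}(a)=xa$ and $B$ at $\overline{f}(b)=yb$, contradicting the observation preceding Proposition \ref{pr6} that an indecomposable act inside a coproduct lies in a single component. With this one-line correction your proof is essentially identical to the paper's.
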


\begin{proof}
The implications $i\Longrightarrow ii,$ $ii\Longrightarrow iii$ and $v\Longrightarrow vi$ are clear.

The implication $iv\Longrightarrow i$ can be seen in \cite[Proposition 3.1.13]{kilp}.

The equivalence of $iv$ and $v$ is thanks to Proposition \ref{pr7}.

$iii\Longrightarrow iv$ By contradiction, suppose that for some acts $A_S$ and $B_S$, $A_S \coprod B_S$ is (weakly) injective and $S$ is not left reversible. So there exist right ideals $I$ and $J$ of $S$ such that $I\cap J=\emptyset.$ Since $A_S \coprod B_S$ is  (weakly) injective, for some fix elements $a\in A$ and $b\in B$ the map $f: I\cup J \longrightarrow A \coprod B$ defined by $f(i)=ai$ for each $i\in I$ and $f(j)=bj$ for each $j\in J$, must be extended to a homomorphism $\overline{f}:S \longrightarrow A \coprod B.$ Definition of $f$ implies that $Im(\overline{f}) \cap A\neq \emptyset$ and $Im(\overline{f}) \cap B\neq \emptyset,$ meanwhile $S$ is indecomposable which provides that $Im(\overline{f}) \subseteq A$ and $Im(\overline{f}) \subseteq B,$ a contradiction.

To prove $vi \Longrightarrow vii$ let $Q$ be an indecomposable injective $S-$act with two zero elements $\Theta_1$ and $\Theta_2$. Our assumption implies that $\Theta_1 \sqcup \Theta_2$ is indecomposable, which is impossible.

$vii\Longrightarrow iv$.  On the contrary, suppose that $S$ is not left reversible. Then by proposition \ref{pr8}, there exists a set $I$ with $|I|\geq 2$ that $I^S$ is an indecomposable cofree $S-$act. Now it is clear that for different elements $x$ and $y$ in $I$, constant functions $f=c_{x}$ and $g=c_{y}$ in $I^S$ are two different zero elements of $I^S$ and the fact that $I^S$ is an injective $S-$act leads to a contradiction.
\end{proof}


In the next proposition we characterize monoids over which InC-injective acts are injective.

\begin{prop} \label{pr5}
For a monoid $S$ the following conditions are equivalent.
\begin{itemize}
\item[{\rm (i)}] All InC-injective acts are InD-injective (PInD-injective),
\item[{\rm (ii)}] all InC-injective acts are injective,
\item[{\rm (iii)}] injective envelop of indecomposable acts are indecomposable,
\item[{\rm (iv)}] all indecomposable injective acts are the same injective acts with only one zero or all injective acts are indecomposable,
\item[{\rm (v)}] $S$ is not a left reversible monoid or $S$ contains a left zero,
\item[{\rm (vi)}] indecomposable components of injective acts are injective,
\item[{\rm (vii)}] indecomposable components of injective acts are InD-injective.
\end{itemize}
\end{prop}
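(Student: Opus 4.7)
The plan is to pivot on condition (ii) and reduce every other condition to it, using the dichotomy in (v) as a bridge. The equivalence (i) $\Leftrightarrow$ (ii) is immediate because both InD- and PInD-injective acts contain a zero (as noted in the Introduction), so the hypothesis of (i) forces every InC-injective act to have a zero and thus to be injective by Theorem \ref{InC-inj+0=inj}. For (ii) $\Leftrightarrow$ (vi) $\Leftrightarrow$ (vii), Lemma \ref{le3} (applied to the decomposition of an injective $Q$ as $Q_k \sqcup \bigsqcup_{i\ne k} Q_i$) makes each indecomposable component $Q_k$ into an InC-injective act, so (ii) promotes it to injective (yielding (vi)), which trivially implies (vii); conversely, if $Q$ is InC-injective without a zero then $Q^{\theta}$ is injective and each indecomposable component of $Q^{\theta}$ lying inside $Q$ would be InD-injective by (vii), hence contain a zero, contradicting the zerolessness of $Q$.

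For (v) $\Rightarrow$ (ii), I split along the disjunction: if $S$ is not left reversible, the opening argument in the proof of Theorem \ref{InC-inj+0=inj} already shows every InC-injective act contains a zero; if $S$ has a left zero $z$, then for any act $A$ and any $a \in A$ the element $az$ is a zero of $A$, so the conclusion is automatic. For the converse (ii) $\Rightarrow$ (v), assuming $S$ is left reversible with no left zero I take $X = S$ and consider
\[
A = \{f \in S^S : fs \text{ is non-constant for every } s \in S\}
\]
inside the injective cofree act $S^S$. Left reversibility makes $A$ and $S^S \setminus A$ both subacts---if $fs_0$ is constant and $t \in S$, pick $u, v$ with $s_0 u = tv$ so that $(ft)v = f(s_0 u)$ is constant---absence of a left zero forces $|sS| \geq 2$ for every $s$ and hence puts $\mathrm{id}_S$ into $A$, and by construction $A$ has no zero. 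Applying Lemma \ref{le3} to $S^S = A \sqcup (S^S \setminus A)$ then makes $A^{\theta}$ injective, exhibiting an InC-injective act without a zero and breaking (ii).

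For (v) $\Leftrightarrow$ (iii) I use the same dichotomy. If $S$ is not left reversible, Proposition \ref{pr4} gives that every injective is indecomposable and (iii) is vacuous. If $S$ has a left zero, then for indecomposable $A$ with $E(A) = \bigsqcup E_i$ and $A \subseteq E_j$ the retraction $r : E(A) \to E_j$ collapsing all other components to the zero of $E_j$ is the identity on $A$, so essentiality of $A \subseteq E(A)$ forces $r$ injective and hence $E(A) = E_j$. In the failing case ($S$ left reversible without a left zero), take an indecomposable component $C$ of the zeroless InC-injective act produced above; Lemma \ref{le3} makes $C^{\theta}$ injective, and $C \subseteq C^{\theta}$ is essential because any congruence identifying $\theta$ with some $c \in C$ must identify $c$ with some $cs \neq c$ (as $C$ has no zero), whence $E(C) = C^{\theta}$ is decomposable and (iii) fails. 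Finally, (v) $\Leftrightarrow$ (iv) follows from the same case analysis: $S$ not left reversible yields alternative (b) via Proposition \ref{pr4}(iii); $S$ left reversible with a left zero yields alternative (a), since every component of an injective has a zero (via $az$) and hence an injective with exactly one zero is indecomposable; the remaining case produces the decomposable injective $A^{\theta}$ with exactly one zero, falsifying both alternatives. The main technical obstacles are the construction and verification in (ii) $\Rightarrow$ (v) and the essentiality checks in the two directions of (v) $\Leftrightarrow$ (iii).
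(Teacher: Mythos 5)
Your proof is correct, but it is organized quite differently from the paper's and uses a different key construction at the crucial step. The paper proves a single cycle (i)$\Leftrightarrow$(ii)$\Rightarrow$(iii)$\Rightarrow$(iv)$\Rightarrow$(v)$\Rightarrow$(vi)$\Rightarrow$(vii)$\Rightarrow$(i), whereas you build a hub around (ii) and (v); this costs you a few extra implications but makes each one short. The main divergence is in producing a witness when $S$ is left reversible without a left zero: the paper takes the indecomposable component of $S^S$ containing $\mathrm{id}_S$ and shows it is zeroless by combining Propositions \ref{pr6} and \ref{pr7} (a zero in that component would force a left zero of $S$), while you exhibit the explicit decomposition $S^S = A \sqcup B$ with $A=\{f: fs \text{ non-constant for all } s\}$, verifying that $B$ is a subact directly from left reversibility and that $\mathrm{id}_S\in A$ from the absence of a left zero. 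Your verification is self-contained and arguably more transparent, at the price of redoing by hand what \ref{pr6}/\ref{pr7} give for free; both then invoke Lemma \ref{le3} to get a zeroless InC-injective act. Your treatment of (iii) also differs: the paper's (ii)$\Rightarrow$(iii) uses minimality of the envelope plus Lemma \ref{le3}, while your (v)$\Rightarrow$(iii) in the left-zero case uses the retraction onto the component containing $A$ and essentiality of $A\subseteq E(A)$ --- a clean argument the paper does not use --- and your explicit essentiality check for $C\subseteq C^{\theta}$ makes rigorous a step the paper dismisses with ``clearly.'' One small point to patch: in (v)$\Rightarrow$(iv), alternative (a) is an equality of classes, and you only argue that an injective act with one zero is indecomposable; the reverse inclusion (an indecomposable injective act has only one zero) should be cited from Proposition \ref{pr4}, part (vii), which holds since $S$ is left reversible in that case.
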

\begin{proof} $i \Longleftrightarrow ii$. If $Q$ is an InC-injective $S-$act, then it is InD-injective by assumption. Then by the statement after the definition of InD-injectivity $Q$ contains a zero and hence by Theorem \ref{InC-inj+0=inj} is injective. The converse is clear.

$ii\Longrightarrow iii$. Let $Q$ be an indecomposable $S-$act with an injective envelop $E(Q)=E$. Suppose, contrary to our claim that $E_S=A_S\sqcup B_S$. Since $Q$ is indecomposable, there is no loss of generality in assuming $Q_S \subseteq A_S$ which Lemma \ref{le3} implies that $A^{\theta}$ is injective. So $A_S$ is InC-injective and consequently is injective by our assumption which contradicts the minimality of $E_S$.

$iii\Longrightarrow iv$. Suppose that $S$ is a left reversible monoid. Regarding Proposition \ref{pr4} part $vii$, any indecomposable injective act contains only one zero. On the other hand suppose that $Q_S$ is an injective act with only one zero. On the contrary assume that $Q$ is decomposable and $\bigsqcup \limits _{i\in I} Q_i$ is its unique decomposition into indecomposable subacts. Since $Q$ contains only one zero element, there exists $i\in I$ such that  $Q_i$ does not have any zero. In light of  Lemma \ref{le3} $Q_i^{\theta }=Q_i \cup \{\theta\} $ is a decomposable injective act  and clearly is injective envelop of indecomposable right $S-$act $Q_i$, contrary to our assumption. Thus $Q_S$ is indecomposable.

If $S$ is not left reversible then all cofree right $S$-acts are indecomposable using Proposition \ref{pr8}. Now let $Q$ be an injective $S-$act. As for any $\emptyset \neq X$ there exists an $X$-cofree object in {\bf Act-}$S$, then $Q_S$ can be embedded into a cofree right $S-$act namely $I^S$ (see \cite[Proposition 2.4.3]{kilp}). Therefore $Q_S$ is a retract of indecomposable right $S-$act $I^S$ and hence is indecomposable.

$iv\Longrightarrow v$. Suppose, contrary to our claim, that $S$ is a left reversible monoid without a left zero. Since $S$ is left reversible the cofree right $S-$act $S^S$ is decomposable. Suppose $\bigsqcup \limits _{i\in I} Q_i$ is its unique decomposition into indecomposable acts. Let $\text{id}_S \in Q_{i_0},$ for some $i_0 \in I.$ Since $S$ does not have a left zero, $Q_{i_0}$ does not have a zero element. Indeed if $f$ is a zero of $Q_{i_0},$ then in account of Proposition \ref{pr6} and Proposition \ref{pr7} there exists $s, t\in S$ such that $fs=\text{id}_S t.$ So $\text{id}_S t$ is a zero element. So for each $x\in S, \text{id}_S tx=\text{id}_S t,$ which implies $tx=t,$ for each $x\in S.$ So $t$ is a left zero of $S$ which contradicts our assumption. Therefore $S^S$ is the disjoint union of subacts $A_S$ and $B_S$ where $A_S$ contains no zero element. Regarding Lemma \ref{le3}, $A_S \cup \{\theta\}$ is a decomposable injective $S-$act with only one zero which contradicts our assumption.

$v\Longrightarrow vi$. If $S$ is not a left reversible monoid, in the proof of the implication $iii\Longrightarrow iv$ we observe that all injective acts are indecomposable and we are done. If $S$ contains a left zero, clearly all indecomposable components of injective acts which are InC-injective acts contains a zero element and so is injective.

The implication $vi\Longrightarrow vii$ is Clear. As indecomposable components of injective acts are InC-injective, the equivalence of $i$ and $vii$ is resulted.
\end{proof}

\begin{prop} \label{all ind are InC-injective}
  All indecomposable right acts over a monoid $S$ are InC-injective if and only if all right $S-$acts are InC-injective or all indecomposable right $S-$acts are injective.
\end{prop}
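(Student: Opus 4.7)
The plan is to prove the non-trivial direction ($\Rightarrow$) by splitting on whether $S$ is left reversible, with the backward direction being essentially free. The key tools are Theorem \ref{InC-inj+0=inj}, Proposition \ref{pr5}, and Proposition \ref{prod & coprod & retract of InC-injective}.

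For the backward direction, if every right $S$-act is InC-injective, then in particular every indecomposable one is; and if every indecomposable right $S$-act is injective, then every such act is certainly InC-injective since injectivity implies InC-injectivity. So only ($\Rightarrow$) requires work.

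Now assume all indecomposable right $S$-acts are InC-injective. First, if $S$ is not left reversible, then condition (v) of Proposition \ref{pr5} holds, so by the equivalence of (v) and (ii) in that proposition, every InC-injective right $S$-act is injective. Combining with our hypothesis, every indecomposable right $S$-act is injective, which is the second disjunct of the conclusion. Secondly, if $S$ is left reversible, I would invoke Proposition \ref{prod & coprod & retract of InC-injective}(ii), which guarantees that arbitrary coproducts of InC-injective acts remain InC-injective. Since every right $S$-act decomposes as the disjoint union of its (unique) indecomposable components (as recorded in Remark \ref{ind defn}), and each component is InC-injective by hypothesis, any right $S$-act is a coproduct of InC-injective acts and hence itself InC-injective. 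This gives the first disjunct.

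The only real decision point is the case split on left reversibility, which is natural because both propositions we rely on (\ref{pr5} and \ref{prod & coprod & retract of InC-injective}(ii)) pivot on exactly that property. I do not anticipate any genuine obstacle: once the case split is made, each branch is a direct invocation of a previously established equivalence. The main thing to be careful about is making sure that in the left-reversible case we can realize every act, not merely every indecomposable one, as a coproduct of its indecomposable components so that the coproduct stability of InC-injectivity applies; this is exactly the content of Remark \ref{ind defn}.
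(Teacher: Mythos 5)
Your proposal is correct and follows essentially the same route as the paper: the same case split on left reversibility, with Proposition \ref{prod & coprod & retract of InC-injective}(ii) handling the left reversible case via the decomposition into indecomposable components, and Proposition \ref{pr5} handling the non-left-reversible case. No gaps.
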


\begin{proof}
Let all indecomposable right $S-$acts be InC-injective. If $S$ is left reversible, then by Proposition \ref{prod & coprod & retract of InC-injective} all right $S-$acts are InC-injective. If $S$ is not left reversible, then by Proposition \ref{pr5}, all InC-injective acts are injective. So by assumption all indecomposable right $S-$acts are injective. The converse is clear.
\end{proof}

 The following theorem for InC-injective acts is similar to the injective case. The proof is just an adaptation of the proof of \cite[Theorem 4.5.13]{kilp}.

\begin{thrm}
All right acts over a monoid $S$ are InC-injective if and only if $S$ is a regular principal right ideal monoid whose all idempotents are special.
\end{thrm}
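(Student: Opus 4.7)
The plan is to reduce the statement to the known characterization of absolutely right injective monoids (\cite[Theorem 4.5.13]{kilp}) by invoking Theorem \ref{InC-inj+0=inj}. Since $Q$ is InC-injective iff $Q^{\theta}$ is injective, the hypothesis ``every right $S$-act is InC-injective'' is equivalent to ``for every right $S$-act $Q$, the act $Q^{\theta}$ is injective,'' which (because any act containing a zero already equals its own $(\cdot)^{\theta}$) is in turn equivalent to ``every right $S$-act with a zero element is injective.''

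By the Skornjakov-Baer criterion (\cite[Theorem 1]{Skornjakov69}), an $S$-act with zero is injective exactly when every homomorphism from any right ideal of $S$ into it extends to $S$. Thus our blanket hypothesis amounts to the following extension property: for every right ideal $I$ of $S$, every $S$-act $A$ with a zero, and every $f \in \text{Hom}(I, A)$, there exists $\bar{f} \in \text{Hom}(S, A)$ extending $f$. This is precisely the extension property driving Kilp's structural argument for absolute injectivity, which extracts from it that $S$ must be a regular principal right ideal monoid whose idempotents are all special, and whose converse half recovers the extension property from the structural side.

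Accordingly, the proof proceeds by following the argument of \cite[Theorem 4.5.13]{kilp} step by step, with each appeal to injectivity of a target act $A$ replaced by an appeal to InC-injectivity of $A$ combined with Theorem \ref{InC-inj+0=inj} applied to $A^{\theta}$. The main point to verify, which I expect to be the only real obstacle, is that this substitution is harmless at each stage: the candidate target acts constructed in Kilp's proof are various cyclic or Rees factor acts, and we need to check either that they already carry a zero element by construction or that replacing them by their $(\cdot)^{\theta}$-version does not disturb the deduction. This check is routine because the structural analysis relies only on the extension property established above, and because an extension $\bar{f} \colon S \to A^{\theta}$ of a homomorphism $f \colon I \to A \subseteq A^{\theta}$ whose image lands outside the adjoined zero descends to an extension $S \to A$.
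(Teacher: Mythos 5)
Your proposal is correct and takes essentially the same route as the paper, whose entire proof is the one-line remark that the argument is an adaptation of \cite[Theorem 4.5.13]{kilp}. Your explicit reduction via Theorem \ref{InC-inj+0=inj} --- all acts are InC-injective if and only if all acts with a zero are injective, which is exactly the hypothesis of the cited theorem --- makes precise why that adaptation goes through, and is if anything a cleaner justification than the paper's bare assertion.
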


\section{Indecomposable Domain Injective Acts}

In this section we study some properties of InD-injective acts and their relevance to the other injective properties. First we get some useful results.

\begin{prop}\label{LC-injective & locally cyclic}
For an act $C$ over a monoid $S$ the following statements are equivalent.

\begin{itemize}
\item[{\rm (i)}]  $C$ is InD-injective (PInD-injective),

\item[{\rm (ii)}] for any right $S-$act $B$ containing $C$, and any generating set $\{c_i\}_{i \in I}$ of an indecomposable subact of $C$, there exists an $S-$homomorphism $f \in \text{Hom}(B,C)$ such that $f(c_i)=c_i$, for any $i \in I,$

\item[{\rm (iii)}] for any $\{c_i\}_{i \in I}$ of elements of  $C$ that generates an indecomposable subact of $C$, there exists an $S-$homomorphism $f \in \text{Hom}(E(C),C)$ such that $f(c_i)=c_i$, for any $i \in I,$ where $E(C)$ is the injective envelope of $C.$
\end{itemize}
\end{prop}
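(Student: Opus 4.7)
The plan is to cycle (i)$\Rightarrow$(ii)$\Rightarrow$(iii)$\Rightarrow$(i), with the injective envelope $E(C)$ serving as the bridge between the abstract injectivity condition and the concrete retraction condition of (iii).

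For (i)$\Rightarrow$(ii), let $D$ be the indecomposable subact of $C$ generated by $\{c_i\}_{i\in I}$; then $D\subseteq C\subseteq B$, the inclusion $D\hookrightarrow B$ is an embedding whose domain is indecomposable, and the inclusion $D\hookrightarrow C$ is a (mono)morphism. Applying InD-injectivity (or PInD-injectivity, using that the inclusion $D\hookrightarrow C$ is a monomorphism) of $C$ extends this inclusion to some $f\in\mathrm{Hom}(B,C)$ with $f|_D=\mathrm{id}_D$, whence $f(c_i)=c_i$ for every $i\in I$. The implication (ii)$\Rightarrow$(iii) is immediate by taking $B=E(C)$, since $C$ is a subact of its injective envelope.

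For (iii)$\Rightarrow$(i), consider an embedding $\iota:A\hookrightarrow B$ with $A$ indecomposable together with a homomorphism (respectively a monomorphism, in the PInD-case) $g:A\to C$. Composing $g$ with the inclusion $C\hookrightarrow E(C)$ and invoking the injectivity of $E(C)$ produces an extension $h:B\to E(C)$ with $h\circ\iota$ equal to $g$ followed by the inclusion. By Remark \ref{ind defn} the image $g(A)$ is an indecomposable subact of $C$, so (iii) applied to any generating set of $g(A)$ yields $f:E(C)\to C$ with $f|_{g(A)}=\mathrm{id}_{g(A)}$. Then $f\circ h:B\to C$ is the required extension, since $(f\circ h)(\iota(a))=f(g(a))=g(a)$ for every $a\in A$; the argument is identical for InD and PInD.

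The main things to verify are the use of the injective envelope $E(C)$---which exists for every act (standard act-theoretic result, already used in the proof of Proposition \ref{pr5})---and the fact, noted in Remark \ref{ind defn}, that homomorphic images of indecomposable acts are indecomposable. A small subtlety worth flagging is that (iii) tacitly forces $C$ to contain a zero: for any $f:E(C)\to C$ produced by (iii), the image of the zero $\theta\in E(C)$ satisfies $f(\theta)\cdot s=f(\theta\cdot s)=f(\theta)$ and is therefore a zero of $C$, which is consistent with the fact that InD- and PInD-injective acts must contain a zero element.
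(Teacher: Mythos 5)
Your proposal is correct and follows essentially the same route as the paper: extend the inclusion of the generated indecomposable subact for (i)$\Rightarrow$(ii), specialize $B=E(C)$ for (ii)$\Rightarrow$(iii), and for (iii)$\Rightarrow$(i) lift through the injective envelope and compose with the retraction supplied by (iii), using that homomorphic images of indecomposable acts are indecomposable. The closing observation about (iii) forcing a zero in $C$ is a harmless extra not present in the paper.
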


\begin{proof}
(i)$\Longrightarrow$(ii)
Let $\{c_i\}_{i \in I}$ be a generating set of an indecomposable subact of $C$, say $D.$ Suppose that $B$ is a right $S-$act containing $C,$ and $i:D\longrightarrow B $ is the inclusion map. Since $C$ is InD-injective (PInD-injective), there exists an $S-$homomorphism $f \in \text{Hom}(B,C)$ extending the inclusion map $j\in \text{Hom}(D,C).$ So $fi=j$ which implies $f(c_i)=j(c_i)=c_i.$

(ii)$\Longrightarrow$(iii)
It suffices to put in (ii) $B:=E(C).$

(iii)$\Longrightarrow$(i)
Assume that $D$ is an indecomposable right $S-$act, $B$ is a right $S-$act containing $D$ and consider the following diagram:
\vspace{1.3 cm}
\begin{center}
\begin{tikzcd}
D \arrow[hook]{r}{i} \arrow{d}[swap]{h}
&B \\
 C  \arrow{d}[swap]{j} & \\
 E(C) &
\end{tikzcd}
\end{center}

where $i$ and $j$ are inclusions. Since $E(C)$ is injective, there exists $g\in \text{Hom}(B,E(C))$ extending $jh$ to $B.$ Since $D$ is indecomposable, $\{h(d)\}_{d\in D}$ generates an indecomposable subact of $C$. So our assumption yields the existence of  $f\in \text{Hom}(E(C),C)$ for which $f(h(d))=h(d),$ for each $d\in D.$ Now put $\overline{h}=fg$ from $B$ into $C.$ Then $\overline{h}(d)=f(g(d))=f(j(h(d))=f(h(d)),$ that is, $\overline{h}i=h.$ Thus $C$ is InD-injective (PInD-injective).
\end{proof}



\begin{prop} \label{Ind P.InD-injective is  injective}
Every indecomposable PInD-injective (InD-injective) right $S-$act is injective.
\end{prop}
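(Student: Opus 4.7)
The plan is to show that $C$ is a retract of its injective envelope $E(C)$, and then conclude by the standard fact that retracts of injective acts are injective.

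First I would record two preliminary observations. Since $C$ is PInD-injective (or InD-injective), by the remark immediately after the definition of these notions $C$ must contain a zero element. This guarantees that the injective envelope $E(C)$ exists in \textbf{Act-}$S$, together with a fixed embedding $\iota : C \hookrightarrow E(C)$.

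Next I would apply the hypothesis directly to the embedding $\iota$. Because $C$ is assumed to be \emph{indecomposable}, the inclusion $\iota : C \hookrightarrow E(C)$ is an embedding of an indecomposable subact into an $S$-act, and the identity map $\mathrm{id}_C : C \to C$ is, in particular, a monomorphism. Therefore the defining extension property of PInD-injectivity (respectively InD-injectivity) applied to the diagram
\begin{center}
\begin{tikzcd}
C \arrow[hook]{r}{\iota} \arrow{d}[swap]{\mathrm{id}_C} & E(C) \\
C &
\end{tikzcd}
\end{center}
produces a homomorphism $r \in \mathrm{Hom}(E(C), C)$ with $r \circ \iota = \mathrm{id}_C$. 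Thus $r$ is a retraction of $E(C)$ onto $C$, i.e.\ $C$ is a retract of $E(C)$.

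Finally, since $E(C)$ is injective and retracts of injective acts are injective (this is a standard fact, and a special case of Proposition \ref{prod & coprod & retract of InC-injective}(i) even in the InC setting), it follows that $C$ is injective. I do not anticipate any serious obstacle; the only point that needs to be checked carefully is that the indecomposability of $C$ is exactly what allows the hypothesis to be applied to the pair $(\mathrm{id}_C, \iota)$, and that the existence of $E(C)$ is legitimate, which is why one first records that $C$ has a zero element.
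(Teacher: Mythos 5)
Your proof is correct and follows essentially the same route as the paper's: apply the PInD-injectivity (or InD-injectivity) hypothesis to the identity map along the inclusion $C\hookrightarrow E(C)$, using indecomposability of $C$ to make the hypothesis applicable, and conclude that $C$ is a retract of the injective act $E(C)$ and hence injective. The extra remarks about the zero element and the existence of $E(C)$ are harmless but not needed beyond what the paper already assumes.
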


\begin{proof}
Suppose that $A$ is an indecomposable PInD-injective right $S-$act. Let $E(A)$ be the injective envelope of the $S-$act $A$ and $i$ be the inclusion map from $A$ into $E(A).$ Since $A$ is PInD-injective, for the monomorphism, $\text{id}_A$ there exists a homomorphism $\overline{\text{id}_A} \in \text{Hom}(E(A),A)$ which extends $\text{id}_A,$ that is, $\overline{\text{id}_A}\circ i=\text{id}_A$. So $A$ is a retract of the injective $S-$act $E(A)$ and hence is injective.
\end{proof}

Regarding the Skornjakov-Baer criterion for injective acts the next result states a weaker criterion for InD-injective acts.

\begin{prop}\label{}
A right $S-$act $C$ is InD-injective (PInD-injective), if and only if $C$ has a zero and for each indecomposable subact $A$ of every indecomposable $S-$act $B$ and each $S-$homomorphism ($S-$monomorphism) $f\in \text{Hom}(A,C)$ there exists $g\in \text{Hom}(B,C)$ which extends $f.$
\end{prop}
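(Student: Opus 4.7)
\medskip

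\noindent\textbf{Proof proposal.} The necessity is essentially immediate. If $C$ is InD-injective (PInD-injective), the excerpt already observes that every such act contains a zero, and the stated extension property is a particular instance of the definition (the case where the ambient act $B$ happens to be indecomposable). So only the sufficiency requires work.

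\smallskip

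For sufficiency, assume $C$ contains a zero element $\theta$ and the restricted extension property holds. Given an arbitrary $S$-act $B$, an indecomposable subact $A$ of $B$, and a homomorphism (respectively monomorphism) $f\in\text{Hom}(A,C)$, I would decompose $B$ into its indecomposable components $B=\bigsqcup_{i\in I}B_i$ using the unique decomposition recalled in Remark \ref{ind defn}. Since $A$ is indecomposable, the observation at the beginning of Section 2 (an indecomposable subact of a coproduct sits inside a single summand) forces $A\subseteq B_{i_0}$ for some (unique) $i_0\in I$.

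\smallskip

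Now apply the hypothesis to the configuration $A\subseteq B_{i_0}$, in which $B_{i_0}$ is indecomposable and $A$ is an indecomposable subact of it: this produces $g_{i_0}\in\text{Hom}(B_{i_0},C)$ with $g_{i_0}|_A=f$. For each $i\neq i_0$, define $g_i\colon B_i\to C$ to be the constant map with value $\theta$; this is a well-defined $S$-homomorphism because $\theta$ is fixed by the $S$-action. Gluing the $g_i$ across the coproduct decomposition yields $\bar f\in\text{Hom}(B,C)$ extending $f$, which is exactly what InD-injectivity (respectively PInD-injectivity, since the constructed $\bar f$ still agrees with the given $f$, which may be taken to be a monomorphism) demands.

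\smallskip

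I do not anticipate a serious obstacle: the argument is purely a ``reduction-to-components'' move, and every ingredient is already on the table (existence and uniqueness of the indecomposable decomposition, the fact that indecomposable subacts lie in one component, and the existence of a zero in $C$ to handle the remaining components). The only point deserving care is to verify that the constant map to $\theta$ on each $B_i$ with $i\neq i_0$ is genuinely an $S$-map and that the glued map is compatible on the disjoint summands; both are routine.
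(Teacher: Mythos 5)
Your proposal is correct and follows essentially the same route as the paper: reduce to the indecomposable component $B_{i_0}$ containing $A$, apply the hypothesis there, and send the remaining components to the zero of $C$. No issues.
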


\begin{proof}
In light of explanation after definition of InD-injectivity  we just need to prove sufficiency part.

Let $A$ be an indecomposable right $S-$act and $B$ a right $S-$act containing $A.$ Suppose that  $\coprod_{i\in I}B_i$ is the decomposition of $B$ into indecomposable subacts $B_i$ and $f\in \text{Hom}(A,C).$ Then $A\subseteq B_i,$ for some $i\in I.$ Since $B_i$ is indecomposable, by assumption there exists $\overline{f}\in \text{Hom}(B_i,C)$ which extends $f.$ Now define, $g: B \longrightarrow C$ by $g(x)=\overline{f}(x),$ for $x\in B_i$ and $g(x)=0,$ the zero of $C$, for $x\in B\setminus B_i.$ Then $g$ is a well-defined $S-$homomorphism from $B$ into $C$ extending $f$ as desired.

The proof for PInD-injectivity follows in the same manner.
\end{proof}


\begin{prop}\label{prod & coprod & retract of LC-inj and InD-inj}
For a monoid $S$ we have the following statements.
\begin{itemize}
\item[{\rm (i)}] Any retract of an InD-injective (PInD-injective) $S-$act is also InD-injective (PInD-injective).

\item[{\rm (ii)}] Let $\{A_i\mid i\in I\}$ be a family of right $S-$acts. Then $A=\prod_{i\in I}A_i,$  is InD-injective (PInD-injective) if and only if  $A_i$ is InD-injective (PInD-injective), for every $i\in I.$

\item[{\rm (iii)}] Let $\{A_i\mid i\in I\}$ be an arbitrary family of right $S-$acts. If  $A_i$ is InD-injective (PInD-injective), for every $i\in I,$ then $A=\coprod_{i\in I}A_i,$  is InD-injective (PInD-injective).
\end{itemize}
\end{prop}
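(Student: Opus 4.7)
For part (i), the plan is to run the standard retract argument. Suppose $A$ is a retract of an InD-injective (PInD-injective) act $B$, with coretraction $\iota: A \hookrightarrow B$ and retraction $r: B \to A$ satisfying $r \iota = \mathrm{id}_A$. Given an indecomposable subact $C$ of some act $D$ and a homomorphism (respectively monomorphism) $f: C \to A$, I would compose with $\iota$ to obtain $\iota f: C \to B$, extend it via InD- (PInD-) injectivity of $B$ to some $g: D \to B$, and return $r g: D \to A$. In the PInD case one notes that $\iota f$ remains a monomorphism since $\iota$ is one, so the extension property applies.

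For part (ii), I would proceed as follows. For the \emph{forward} direction, given each $A_i$ InD-injective and a homomorphism $f: C \to \prod_i A_i$ with $C$ an indecomposable subact of $D$, I would project componentwise to get $f_i = \pi_i f: C \to A_i$, extend each to $\bar{f}_i: D \to A_i$ using InD-injectivity of $A_i$, and define $\bar{f}: D \to \prod_i A_i$ by $\bar{f}(d) = (\bar{f}_i(d))_i$; this is easily checked to be a homomorphism that agrees with $f$ on $C$. For the \emph{backward} direction, recall that every InD- (PInD-) injective act carries a zero; so if $\prod_i A_i$ is InD-injective then each $A_i$ in particular has a zero, and the section $s_i: A_i \to \prod_j A_j$ sending $a$ to the tuple with entry $a$ in coordinate $i$ and zeros elsewhere exhibits $A_i$ as a retract of $\prod_j A_j$, so part (i) applies.

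For part (iii), the key observation is that if $C$ is indecomposable and $f: C \to \coprod_i A_i$ is a homomorphism, then $f(C)$ is a homomorphic image of $C$, hence indecomposable, and therefore lies inside a single summand $A_{i_0}$. Thus $f$ corestricts to $f': C \to A_{i_0}$, which I can extend to $\bar{f}': B \to A_{i_0}$ using InD-injectivity of $A_{i_0}$; in the PInD case $f'$ is still a monomorphism since corestriction preserves injectivity. Composing with the canonical injection $A_{i_0} \hookrightarrow \coprod_i A_i$ yields the required extension of $f$ to $B$.

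The main subtlety is the PInD case of part (ii), forward direction: if $f: C \to \prod_i A_i$ is a monomorphism, the projections $\pi_i f$ need not remain monomorphisms, so one cannot invoke PInD-injectivity of $A_i$ on the nose. The cleanest way around this is to use Proposition \ref{LC-injective & locally cyclic}, whose characterizations (ii) and (iii) make no reference to monomorphisms at all; consequently the proof of (iii)~$\Longrightarrow$~(i) there actually extracts the full InD-injective extension property out of the PInD assumption, so the two notions coincide. With this equivalence in hand, the forward product argument proceeds via InD-injectivity of each $A_i$ as described, removing any issue about preserving monomorphisms under projection.
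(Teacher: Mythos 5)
Your proposal is correct. For part (iii) it is exactly the paper's argument (the image of the indecomposable act is indecomposable, hence lands in a single summand $A_{i_0}$, extend there, and compose with the injection), and for parts (i) and (ii) the paper simply says the proofs are "the same as for injectivity," which is what your retract and componentwise-product arguments spell out. The one place you go beyond the paper is the forward direction of (ii) in the PInD case: you rightly note that the projections $\pi_i f$ of a monomorphism into a product need not be monomorphisms, so the injectivity argument does not transfer verbatim to PInD-injectivity, and your repair via Proposition \ref{LC-injective & locally cyclic} is legitimate --- its conditions (ii) and (iii) mention no monomorphisms, and its implication (iii) $\Longrightarrow$ (i) produces an extension of an \emph{arbitrary} homomorphism from an indecomposable act, so InD- and PInD-injectivity coincide and the product argument can be run for InD-injectivity. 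This is a genuine subtlety that the paper's proof-by-reference glosses over, and your treatment of it is a worthwhile addition.
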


\begin{proof}
Proofs of parts (i) and (ii) are the same as for injectivity (see for example \cite[Proposition 1.7.30 and  Proposition 3.1.12]{kilp}).
To prove (iii) assume that $\{A_i \mid i\in I\}$ is a family of right $S-$acts and each $A_i$ is InD-injective (PInD-injective). Let $A=\coprod_{i\in I}A_i$ and $D$ be an indecomposable right $S-$act, $B$ be a right $S-$act containing $D$ and $f\in \text{Hom}(D,A).$ Then $f(D)$ is an indecomposable subact of $A$ which implies $f(D)\subseteq A_j,$ for some $j\in I,$  that is, we may consider $f$ as a homomorphism into $A_j.$ Since $A_j$ is InD-injective (PInD-injective), there exists $\overline{f}\in \text{Hom}(B,A_j)$ which extends $f.$ It may be considered as a homomorphism from $B$ into $A$ extending $f.$ So $A$ is InD-injective (PInD-injective).
\end{proof}

By part (iii) of the above proposition, $\Theta_S \sqcup \Theta_S$ is InD-injective, but by \cite[Proposition 3.1.13]{kilp} it is not injective unless $S$ is left reversible. So the implication $\text{injectivity} \Longrightarrow \text{InD-injectivity}$ is strict.

The next proposition gives conditions under which the converse of part (iii) of the above proposition is valid.

\begin{prop}
InD-injective and PInD-injective properties are transferred from coproducts to their components  if and only if $S$ is not a left reversible monoid or $S$ contains a left zero.
\end{prop}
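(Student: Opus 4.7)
The plan is to leverage Proposition~\ref{pr5}, specifically the equivalence between its condition (v) (``$S$ is not left reversible or contains a left zero'') and its condition (vi) (``indecomposable components of injective acts are injective''). With this equivalence in hand, the sufficiency direction reduces to producing a zero element in each summand, which I will obtain through a carefully chosen piece of the injective envelope.

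For the necessity, I argue the contrapositive. Assume $S$ is left reversible and has no left zero. The argument already carried out in the proof of (iv)$\Longrightarrow$(v) of Proposition~\ref{pr5} produces a decomposition $S^S=Q_{i_0}\sqcup R$ of the cofree act $S^S$ in which the indecomposable component $Q_{i_0}$ containing $\mathrm{id}_S$ has no zero. Since cofree acts are injective, $S^S$ is in particular both InD-injective and PInD-injective, while any InD- or PInD-injective act must contain a zero; hence $Q_{i_0}$ fails both properties, so transferral fails.

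For sufficiency, assume condition (v) of Proposition~\ref{pr5} holds. Let $A=\coprod_{i\in I}A_i$ be InD-injective (respectively, PInD-injective); one may assume each $A_i$ is indecomposable, since for a coarser decomposition one refines each $A_i$ into indecomposable subacts, proves the claim there, and recovers the original summands via Proposition~\ref{prod & coprod & retract of LC-inj and InD-inj}(iii). Fix $j\in I$. The decisive step is to find a zero in $A_j$: form the injective envelope $E(A_j)$ and let $C$ be the unique indecomposable component of $E(A_j)$ containing the indecomposable subact $A_j$. By Proposition~\ref{pr5}(vi), $C$ is injective, and therefore contains a zero $\theta_C$. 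The inclusion $A_j\hookrightarrow A$ is a homomorphism (and a monomorphism) with indecomposable domain, so by the InD- (resp.\ PInD-) injectivity of $A$ it extends along $A_j\hookrightarrow C$ to some $\bar{h}\colon C\to A$. Indecomposability of $C$ together with $\bar{h}|_{A_j}=\mathrm{id}_{A_j}$ forces $\bar{h}(C)\subseteq A_j$, whence $\theta_j:=\bar{h}(\theta_C)$ is a zero of $A_j$.

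With $\theta_j$ secured, InD- (resp.\ PInD-) injectivity of $A_j$ is routine. Given an indecomposable subact $D$ of an act $B$ and a homomorphism (resp.\ monomorphism) $f\colon D\to A_j$, compose with the inclusion and extend via the hypothesis on $A$ to obtain $\tilde{f}\colon B\to A$. Writing $B=\coprod_{k\in K}B_k$ as the decomposition into indecomposable components and letting $D\subseteq B_{k_0}$, indecomposability of $B_{k_0}$ together with $\tilde{f}(D)\subseteq A_j$ forces $\tilde{f}(B_{k_0})\subseteq A_j$. The map $g\colon B\to A_j$ defined to agree with $\tilde{f}$ on $B_{k_0}$ and to be constantly $\theta_j$ on the other components is a well-defined $S$-homomorphism extending $f$. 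The main obstacle throughout is the zero-existence step, which is precisely where the equivalence (v)$\Leftrightarrow$(vi) of Proposition~\ref{pr5} is used in an essential way.
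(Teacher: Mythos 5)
Your proof is correct, but it reaches the conclusion by a genuinely different route in the sufficiency direction. For necessity the paper argues forwards: if the properties transfer, then the indecomposable components of every injective act are InD-injective (hence injective, since an indecomposable PInD-injective act is a retract of its injective envelope), and the equivalence (vi)$\Leftrightarrow$(v) of Proposition \ref{pr5} finishes; your contrapositive via the cofree act $S^S$ with a zero-free indecomposable component is essentially that same argument unrolled into an explicit counterexample, and is fine. The real divergence is in sufficiency, where everything hinges on planting a zero in each cofactor $A_j$. The paper does this elementarily with a case split: if $S$ has a left zero $z$ then $xz$ is already a zero of $A_j$; if $S$ is not left reversible it extends $aS\to A_j$, $as\mapsto xas$, along $aS\hookrightarrow S/\rho$ (the Rees quotient by $bS$, where $aS\cap bS=\emptyset$) and pulls the zero of $S/\rho$ into $A_j$. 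You instead reduce to indecomposable cofactors, invoke (v)$\Rightarrow$(vi) of Proposition \ref{pr5} to make the indecomposable component $C$ of $E(A_j)$ containing $A_j$ injective (hence with a zero), and extend $A_j\hookrightarrow A$ along $A_j\hookrightarrow C$ to import that zero. Your version is uniform (no case split) but leans on the heavier machinery of Proposition \ref{pr5}, whereas the paper's construction is self-contained; also, once the zero exists, the paper simply notes that $A_j$ is then a retract of $A$ and cites closure of InD-injectivity under retracts, which is slightly slicker than your explicit re-extension argument, though both are valid.
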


\begin{proof}
First suppose that InD-injective and PInD-injective properties are transferred from coproducts to their components. So indecomposable components of injective acts are InD-injective (PInD-injective) and hence injective by Proposition \ref{Ind P.InD-injective is  injective}. So by Proposition \ref{pr5}, $S$ is not a left reversible monoid or $S$ contains a left zero.

Now assume that $S$ is not a left reversible or contains a left zero. Suppose that $\{A_i\mid i\in I\}$ is a family of right $S-$acts and $A=\coprod_{i\in I}A_i$ is InD-injective (PInD-injective). First we show that each $A_i$ contains a zero element. If $S$ contains a left zero, then we are done. So assume that $S$ is not left reversible. Then there exist $a, b\in S$ such that $aS \cap bS=\emptyset.$ Let $\rho$ be the Rees congruence on $S$ induced by the right ideal $bS$ and consider the following diagram of right $S-$acts and $S-$homomorphisms:
\begin{center}
\begin{tikzcd}
aS \arrow{r}{\iota} \arrow{d}[swap]{g}
&S/\rho \\
 A_j  \arrow{d}[swap]{\tau} \\
 A
\end{tikzcd}
\end{center}






where $\iota$ and $\tau$ are inclusions and $g: aS \rightarrow A_j$ is defined by $g(as)=xas, s\in S$ for a fixed element $x\in A_j$. Since $aS$ is indecomposable and $A$ is InD-injective (PInD-injective), there exists $\overline{g}: S/ \rho \rightarrow A$ extending $g.$ Now indecomposability of $S/ \rho$, gives $\overline{g}(S/\rho) \subseteq A_i,$ for some $i\in I.$ As $\overline{g}$ extends to $g,$ $i=j$ and therefore $A_j$ contains a zero. Thus each $A_j$ is a retract of $A$ and is InD-injective (PInD-injective) by Proposition \ref{prod & coprod & retract of LC-inj and InD-inj} part (i).
\end{proof}

Recall that a right $S-$act $A$ is called \textit{quasi injective} if it is injective with respect to all inclusions from its subacts into $A,$ that is, $B=A$ in diagram \textbf{(I)}. Also a right $S-$act $A$ is called \textit{pseudo injective} if for any right $S-$act $C$ and any subact $B$ of $C$ any \textbf{monomorphism} from $B$ into $A$ can be extended to $C,$ that is, $f$ is a monomorphism in diagram \textbf{(I)}.

\begin{prop}\label{coproduct QI implies injective}
Let $A$ be a right $S-$act provided that $A\sqcup E(A)$ is pseudo injective (quasi injective). Then $A$ is an injective $S-$act.
\end{prop}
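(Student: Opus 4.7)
The plan is to show that $A$ is a retract of its injective envelope $E(A)$, which immediately yields the injectivity of $A$. I will detail the pseudo injective case; the quasi injective case is analogous, starting instead from the subact $\iota_2(\iota(A)) \subseteq A \sqcup E(A)$ and extending the ``swap'' endomorphism via quasi injectivity, then restricting to $\iota_2(E(A))$.

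First I would use pseudo injectivity of $A \sqcup E(A)$ applied to $B := E(A)$, $C := \iota(A)$ and the monomorphism $f : \iota(A) \to A \sqcup E(A)$ sending $\iota(a)$ to $\iota_1(a)$, where $\iota_1,\iota_2$ are the coproduct injections and $\iota : A \hookrightarrow E(A)$ is the essential embedding. This yields an extension $\bar f : E(A) \to A \sqcup E(A)$. Since $f$ is injective and $\iota(A) \hookrightarrow E(A)$ is essential, the kernel congruence of $\bar f$ is trivial on $\iota(A)$ and hence trivial on $E(A)$; in particular $\bar f$ is itself a monomorphism.

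Next I would analyze the image. The image $\bar f(E(A))$ is a subact of $A \sqcup E(A) = \iota_1(A) \sqcup \iota_2(E(A))$, so it decomposes as $\iota_1(A) \sqcup Y$ with $Y \subseteq \iota_2(E(A))$ (the first summand equals $\iota_1(A)$ since $\bar f(\iota(A)) = \iota_1(A)$). Pulling back through $\bar f$ gives $E(A) = F \sqcup F'$ with $F = \bar f^{-1}(\iota_1(A)) \supseteq \iota(A)$. Since $\bar f|_F$ bijects onto $\iota_1(A)$ and $\iota(A)$ already bijects onto $\iota_1(A)$ via $\bar f|_{\iota(A)}$, injectivity forces $F = \iota(A)$. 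Thus $E(A) = \iota(A) \sqcup F'$.

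To conclude $F' = \emptyset$ I would use essentiality. If $|F'|\ge 2$, the congruence on $E(A)$ generated by identifying two chosen distinct elements of $F'$ stays inside the subact $F'$ (as $F'$ is closed under the $S$-action), hence is trivial on $\iota(A)$ yet non-trivial on $E(A)$, contradicting essentiality. In the remaining case $|F'| = 1$, write $F' = \{z\}$; then $z$ must be a zero, and if $A$ had a zero $\theta_A$, the congruence on $E(A)$ identifying $\iota(\theta_A)$ with $z$ would again be trivial on $\iota(A)$ but non-trivial on $E(A)$, contradicting essentiality. So this sub-case forces $A$ to have no zero and $E(A) \cong A^{\theta}$; ruling it out under the hypothesis requires a second application of pseudo (quasi) injectivity, for instance via a monomorphism from an auxiliary act $B$ whose extension to $A\sqcup E(A)$ imposes a preimage-of-action constraint on $\iota_1(A)$ that cannot be satisfied in the absence of a zero in $A$.

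Once $F' = \emptyset$, one has $\bar f(E(A)) = \iota_1(A)$, and $r := \iota_1^{-1}\circ \bar f : E(A) \to A$ satisfies $r\circ \iota = \mathrm{id}_A$, namely it is a retraction of the essential inclusion $\iota$. Hence $A$ is a retract of the injective act $E(A)$, and therefore $A$ is injective. I expect the principal obstacle to be the elimination of the $|F'|=1$ case—the InC-injective, zero-free exceptional sub-case—since this uses the pseudo (quasi) injectivity hypothesis beyond the single extension that produced $\bar f$.
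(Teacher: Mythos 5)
Your argument follows the paper for its first step (extending $\iota(A)\to\iota_1(A)$ to a map $\bar f$ defined on $E(A)$ via pseudo injectivity, and using essentiality to see that $\bar f$ is a monomorphism), and your image analysis correctly reduces the problem to the single remaining configuration $E(A)=\iota(A)\sqcup\{z\}$ with $A$ zero-free, i.e.\ $E(A)\cong A^{\theta}$. But at exactly that point you stop proving and start describing: ``a second application of pseudo (quasi) injectivity, for instance via a monomorphism from an auxiliary act $B$ whose extension \dots imposes a preimage-of-action constraint \dots'' names no auxiliary act, no monomorphism, and no constraint. This is a genuine gap, and it is not peripheral: the substantive content of the proposition is precisely that pseudo (quasi) injectivity of $A\sqcup E(A)$ forces $A$ to contain a zero, and your unresolved case is exactly the negation of that statement. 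Nor can the case be waved away as vacuous: zero-free acts $A$ with $A^{\theta}$ injective do exist (Example \ref{ex3} together with Lemma \ref{le3}), and for such $A$ one has $E(A)=A^{\theta}$ because $A^{\theta}$ is an essential extension of a zero-free $A$; so one must genuinely invoke the hypothesis on $A\sqcup E(A)$ to exclude it.

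The paper's proof sidesteps your entire decomposition analysis by securing the zero first: it records that $A$ contains a zero $\theta$ (as a consequence of $A\sqcup E(A)$ being pseudo injective), defines the projection $p:A\sqcup E(A)\to A$ that is the identity on the $A$-component and sends the $E(A)$-component to $\theta$, so that $p\jmath_1=\mathrm{id}_A$, and then, with $f$ the endomorphism of $A\sqcup E(A)$ extending $\jmath_2\imath(a)\mapsto\jmath_1(a)$, observes that $pf\jmath_2:E(A)\to A$ is already a retraction of the essential embedding. In particular it never needs to show that $f\jmath_2$ misses the second component, which is what your $F'=\emptyset$ claim amounts to. If you can supply a proof that $A$ has a zero, you may as well discard most of your image analysis and finish as the paper does; without that ingredient your proof does not close.
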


\begin{proof}
Let $A$ be a right $S-$act, $E(A)$ be its injective envelope and $A\sqcup E(A)$ be a pseudo injective $S-$act. Consider the following diagram of right $S-$acts and $S-$ homomorphisms.

\begin{center}
\begin{tikzcd}
A \arrow[hook]{r}{\imath} \arrow{d}[swap]{\text{id}}
& E(A) \arrow[hook]{r}{\jmath_2}
&A \sqcup E(A) \\
 A  \arrow{d}[swap]{\jmath_1} \\
 A\sqcup E(A)
\end{tikzcd}
\end{center}

where $\imath, \jmath_1$ and $\jmath_2$ are inclusions. Note that since $A\sqcup E(A)$ is pseudo injective, $A$ contains a zero element, says $\theta$. So we can define $p \in \text{Hom}(A\sqcup E(A),A)$ with

$$p(a)=\begin{cases} a & \text{if } a\in A \\ \theta & \text{otherwise.} \end{cases}$$

Then $p\jmath_1=\text{id}_A.$ Since $\jmath_1 \text{id}$ is monomorphism and $A\sqcup E(A)$ is pseudo injective so there exists $f\in \text{End}(A\sqcup E(A))$ such that $f\jmath_2\imath=\jmath_1\text{id}.$ Thus $(pf\jmath_2)\imath=pf\jmath_2\imath=p\jmath_1\text{id}=\text{id},$ that is, $A$ is a retract of $E(A)$. So $A$ is injective. The proof of quasi injectivity follows similarly.
\end{proof}

Note that in this proposition, $E(A)$  can be replaced with any injective $S-$act containing $A.$

\begin{thrm}\label{C-inj imply quasi inj}
Let $S$ be a monoid. The following statements are equivalent.
\begin{itemize}
\item[{\rm (i)}] All InD-injective  right $S-$acts are injective,
\item[{\rm (ii)}] all InD-injective right $S-$acts are quasi injective,
\item[{\rm (iii)}] $S$ is left reversible.
\end{itemize}
\end{thrm}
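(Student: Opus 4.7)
The plan is to prove the cycle $(i)\Rightarrow(ii)\Rightarrow(i)$ together with the equivalence $(i)\Leftrightarrow(iii)$. The easy halves first: $(i)\Rightarrow(ii)$ is immediate since every injective act is quasi injective. For $(ii)\Rightarrow(i)$, let $A$ be InD-injective with injective envelope $E(A)$. Since $E(A)$ is injective and hence InD-injective, Proposition \ref{prod & coprod & retract of LC-inj and InD-inj}(iii) makes $A\sqcup E(A)$ InD-injective. Hypothesis $(ii)$ then makes $A\sqcup E(A)$ quasi injective, and Proposition \ref{coproduct QI implies injective} delivers injectivity of $A$.

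For $(i)\Rightarrow(iii)$ I would argue by contradiction: if $S$ is not left reversible, then $\Theta_S\sqcup\Theta_S$ is InD-injective by Proposition \ref{prod & coprod & retract of LC-inj and InD-inj}(iii), yet by Proposition \ref{pr4} it fails to be even weakly injective, contradicting $(i)$.

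The substantive direction is $(iii)\Rightarrow(i)$. Given $S$ left reversible, take an InD-injective act $A$; it contains a zero $\theta$. Let $E:=E(A)$ be an injective envelope of $A$ and fix its decomposition $E=\coprod_{j\in J}E_j$ into indecomposable components. Left reversibility, via Proposition \ref{pr7}, forces every non-empty subact of each $E_j$ to be indecomposable. Consequently $A\cap E_j$ is either empty or an indecomposable subact, and in fact coincides with a single indecomposable component of $A$: two distinct components of $A$ sitting inside the same $E_j$ would form a decomposable subact of $E_j$, again contradicting Proposition \ref{pr7}. For each $j$ with $A\cap E_j=A_{i(j)}\neq\emptyset$, the InD-injectivity of $A$ extends the inclusion $A_{i(j)}\hookrightarrow A$ along $A_{i(j)}\hookrightarrow E_j$ to an $S$-homomorphism $f_j:E_j\to A$; for the remaining $j$, let $f_j$ be the constant map at $\theta$, which is an $S$-homomorphism because $\theta$ is a zero of $A$. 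The universal property of the coproduct assembles the $f_j$ into $f:E\to A$ that restricts to the identity on $A$, so $A$ is a retract of the injective act $E$, hence injective.

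The step I expect to be the main obstacle is the geometric bookkeeping in $(iii)\Rightarrow(i)$: one must verify that each indecomposable component of $A$ lands in a distinct $E_j$ (so that the component-wise definition of the retraction does not conflict) and that the remaining ``empty'' components of $E$ can be absorbed via $\theta$. Both points hinge squarely on Proposition \ref{pr7}, which is exactly where the left-reversibility hypothesis enters; without it, a single $E_j$ could simultaneously contain several components of $A$ and defeat the attempted extension.
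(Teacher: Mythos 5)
Your proof is correct, but it departs from the paper's route in two places worth noting. For the ``hard'' direction $(iii)\Rightarrow(i)$ the paper argues much more economically: an InD-injective act $C$ has a zero, so by the Skornjakov--Baer criterion it suffices to extend homomorphisms along inclusions $A\subseteq B$ with $B$ cyclic; cyclic acts are indecomposable, so Proposition \ref{pr7} (left reversibility) makes $A$ indecomposable and InD-injectivity applies at once. Your version instead builds an explicit retraction $E(A)\to A$ component by component, which requires the extra bookkeeping you flag (each nonempty $A\cap E_j$ is exactly one component of $A$, distinct components lie in distinct $E_j$, empty components are absorbed via $\theta$); all of that checks out via Proposition \ref{pr7}, but it is longer and re-derives what the criterion gives for free. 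Conversely, your handling of $(ii)$ is arguably cleaner than the paper's: you prove $(ii)\Rightarrow(i)$ directly by applying Proposition \ref{coproduct QI implies injective} to $A\sqcup E(A)$ for an arbitrary InD-injective $A$, whereas the paper closes the cycle as $(ii)\Rightarrow(iii)$ by running the same mechanism on the single test act $\Theta_S\sqcup\Theta_S\sqcup E(\Theta_S\sqcup\Theta_S)$ and invoking \cite[Proposition 3.1.13]{kilp}; your separate $(i)\Rightarrow(iii)$ via Proposition \ref{pr4} then completes the equivalence correctly. Both logical architectures cover all three statements.
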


\begin{proof}

(i)$\Longrightarrow$ (ii). It is clear.


(ii)$\Longrightarrow$ (iii). Consider the one element $S-$act, $\Theta_S.$ Clearly $\Theta_S$ is InD-injective. Then Proposition \ref{prod & coprod & retract of LC-inj and InD-inj} implies that, $\Theta_S \sqcup \Theta_S \sqcup E(\Theta_S \sqcup \Theta_S)$ is InD-injective and consequently quasi injective, by assumption. So Proposition \ref{coproduct QI implies injective} results that $\Theta_S \sqcup \Theta_S$ is injective. Therefore $S$ is left reversible, by \cite[Proposition 3.1.13]{kilp}.

(iii)$\Longrightarrow$ (i). Let $S$ be a left reversible monoid and $C$ be an InD-injective right $S-$act. Since $C$ has a zero element, it suffices to prove that it is injective relative to all inclusions into cyclic right $S-$acts. Suppose that $B$ is a  cyclic right $S-$act, $A$ is a subact of $B$ and $f$ is a homomorphism from $A$ to $C.$ Since $B$ is indecomposable, $A$ is indecomposable thanks to Proposition \ref{pr7} and hence $f$ can be extended to $B$ by assumption. Therefore $C$ is injective.
\end{proof}

\begin{thrm}\label{homological classification of InD-injective ideals}
For a monoid $S$ the following statements are equivalent.

\begin{itemize}
\item[{\rm (i)}] All right ideals of $S$ are InD-injective,

\item[{\rm (ii)}] all indecomposable right ideals of $S$ are InD-injective,

\item[{\rm (iii)}] all right ideals of $S$ are PInD-injective,

\item[{\rm (iv)}] all indecomposable right ideals of $S$ are PInD-injective,

\item[{\rm (v)}] all indecomposable right ideals of $S$ are injective,

\item[{\rm (vi)}] $S$ is a regular self-injective monoid all of whose indecomposable right ideals are principal.

\end{itemize}
\end{thrm}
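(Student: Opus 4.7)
The plan is to close the cycle $(\mathrm{i}) \Longrightarrow (\mathrm{ii}) \Longrightarrow (\mathrm{iv}) \Longrightarrow (\mathrm{v}) \Longrightarrow (\mathrm{vi}) \Longrightarrow (\mathrm{i})$ and then to tie (iii) in through the short chain $(\mathrm{i}) \Longrightarrow (\mathrm{iii}) \Longrightarrow (\mathrm{iv})$. Most of the implications are essentially bookkeeping: $(\mathrm{i}) \Longrightarrow (\mathrm{ii})$ and $(\mathrm{iii}) \Longrightarrow (\mathrm{iv})$ are trivial since indecomposable right ideals are right ideals, and both $(\mathrm{i}) \Longrightarrow (\mathrm{iii})$ and $(\mathrm{ii}) \Longrightarrow (\mathrm{iv})$ follow from the fact that extending every homomorphism is a priori stronger than extending only the monomorphisms, so every InD-injective act is PInD-injective. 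For $(\mathrm{iv}) \Longrightarrow (\mathrm{v})$, I would just invoke Proposition \ref{Ind P.InD-injective is  injective} on each indecomposable right ideal.

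The real content lies in $(\mathrm{v}) \Longrightarrow (\mathrm{vi})$. Since $S_S$ is cyclic and therefore indecomposable, assumption (v) forces $S_S$ itself to be injective, which is exactly self-injectivity. For an arbitrary indecomposable right ideal $I$ of $S$, I would combine (v) with the inclusion $\iota : I \hookrightarrow S_S$ to obtain a retraction $f : S \to I$ with $f \circ \iota = \mathrm{id}_I$. Setting $e := f(1) \in I$, the $S$-equivariance of $f$ gives $f(s) = es$ for every $s \in S$, so $I = f(S) = eS$ is principal, and $e = f(e) = e \cdot e = e^2$ shows $e$ is idempotent. Applying the same retract argument to the indecomposable ideal $aS$ yields $aS = eS$ with $e = ax$ for some $x \in S$ and $a = ey$ for some $y \in S$; then $axa = ea = e(ey) = e^2 y = ey = a$, which proves $S$ is regular.

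For the return $(\mathrm{vi}) \Longrightarrow (\mathrm{i})$, I would take a right ideal $I$ and split it as $I = \bigsqcup_j I_j$ into its indecomposable components. Each $I_j$ is a subact of $S_S$, hence a right ideal, and indecomposable, so by (vi) it is principal; regularity then lets me replace its generator by an idempotent, producing $I_j = e_j S$. The endomorphism $s \mapsto e_j s$ of $S$ is a retraction onto $e_j S$, so $I_j$ is a retract of the injective act $S_S$, hence itself injective and in particular InD-injective. Proposition \ref{prod & coprod & retract of LC-inj and InD-inj}(iii) then promotes the coproduct $I = \bigsqcup_j I_j$ to an InD-injective act, closing the cycle.

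The main obstacle is the retract argument in $(\mathrm{v}) \Longrightarrow (\mathrm{vi})$, specifically checking carefully that the retraction $f : S \to I$ identifies $I$ with a principal right ideal generated by an idempotent and that regularity of $S$ falls out of this identification when specialised to $I = aS$. Once this structural description is in place, both the self-injectivity clause of (vi) and the converse direction $(\mathrm{vi}) \Longrightarrow (\mathrm{i})$ reduce to standard retract-and-coproduct manipulations already captured in Proposition \ref{prod & coprod & retract of LC-inj and InD-inj}.
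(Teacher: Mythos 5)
Your proof is correct, and its skeleton (trivial implications among (i)--(iv), then Proposition \ref{Ind P.InD-injective is  injective} for (iv)$\Rightarrow$(v), with the real work in (v)$\Rightarrow$(vi) and (vi)$\Rightarrow$(i)) matches the paper's, but the two content-bearing steps are executed differently. For (v)$\Rightarrow$(vi) the paper simply cites \cite[Theorem 4.5.10]{kilp} to get that $S$ is regular and self-injective, and \cite[Proposition 3.5.5]{kilp} to get that injective right ideals are generated by idempotents; you instead reprove these from scratch via the retraction $f:S\to I$, setting $e=f(1)$ so that $I=eS$ with $e=e^2$, and extracting regularity from the case $I=aS$. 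Your computation is sound (indeed $a=f(a)=f(1)a=ea$ gives $axa=ea=a$ directly), and this makes the step self-contained at the cost of redoing standard material. For (vi)$\Rightarrow$(i) the paper verifies the InD-injectivity condition for an arbitrary right ideal $I$ directly: given $f\in\mathrm{Hom}(A,I)$ with $A$ indecomposable, the image $f(A)$ is an indecomposable right ideal, hence injective, and $f$ extends into $f(A)\subseteq I$. You instead decompose $I$ into its indecomposable components, show each is a retract of the injective act $S_S$ via left multiplication by an idempotent generator, and then invoke Proposition \ref{prod & coprod & retract of LC-inj and InD-inj}(iii) for the coproduct; this mirrors what the paper does in its (ii)$\Rightarrow$(iii) step and is equally valid. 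Your routing of (iii) through (i)$\Rightarrow$(iii)$\Rightarrow$(iv) rather than the paper's (ii)$\Rightarrow$(iii) is an immaterial reorganization, since both rest on the observation that InD-injectivity implies PInD-injectivity.
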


\begin{proof}
The implications (i)$\Longrightarrow$(ii) and (iii)$\Longrightarrow$(iv) are clear. Since every right ideal $I$ of $S$ is the coproduct of indecomposable right ideals each of which by assumption is PInD-injective, Proposition \ref{prod & coprod & retract of LC-inj and InD-inj} yields the implication (ii) $\Longrightarrow$ (iii). The implication (iv)$\Longrightarrow$(v)  follows from Proposition \ref{Ind P.InD-injective is  injective}.

(v)$\Longrightarrow$(vi) since all principal right ideals are indecomposable, by \cite[Theorem 4.5.10]{kilp}, $S$ is a regular self-injective monoid. Now by \cite[Proposition 3.5.5]{kilp}, every indecomposable right ideal of $S$ is generated by an idempotent.

(vi)$\Longrightarrow$(i) Since $S$ is regular self-injective monoid, all principal right ideals of $S$ are injective by \cite[Theorem 4.5.10]{kilp} and hence all indecomposable right ideals of $S$ are injective by assumption. Now let $I$ be an arbitrary right ideal of $S.$ Let $A$ be an indecomposable subact of a right $S-$act $B$ and $f\in \text{Hom}(A,I).$ Then $f(A)$ is an indecomposable right ideal of $S$ and so it is injective. So there exists $\overline{f}\in \text{Hom}(B,f(A)),$ extending $f$ as desired.
\end{proof}

\begin{thrm}\label{completely InD-injective monoid}
For a monoid $S$ the following statements are equivalent.

\begin{itemize}
\item[{\rm (i)}] All right $S-$acts are InD-injective,

\item[{\rm (ii)}] all indecomposable right $S-$acts are InD-injective,

\item[{\rm (iii)}] all right $S-$acts are PInD-injective,

\item[{\rm (iv)}] all indecomposable right $S-$acts are PInD-injective,

\item[{\rm (v)}] all indecomposable right $S-$acts are injective,

\item[{\rm (vi)}] $S$ is right absolutely injective,

\item[{\rm (vii)}] $S$ contains a right zero  and all its right ideals are generated by special idempotents.
\end{itemize}
\end{thrm}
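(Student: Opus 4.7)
The plan is to close a cycle of implications among the seven conditions, drawing on results earlier in the paper and on the classical Kilp characterization of absolutely right injective monoids \cite[Theorem 4.5.13]{kilp}, already adapted at the end of Section~2 to the InC-setting.

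I would first dispatch the routine implications. The pairs (i)$\Rightarrow$(ii) and (iii)$\Rightarrow$(iv) restrict the class of test acts to the indecomposable ones; (i)$\Rightarrow$(iii) and (ii)$\Rightarrow$(iv) hold because a PInD-extension is only required for monomorphisms, so InD-injectivity is stronger than PInD-injectivity; (vi)$\Rightarrow$(i),(iii) are immediate from the definition of injectivity; and (iv)$\Rightarrow$(v) is Proposition~\ref{Ind P.InD-injective is  injective}.

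The substantive step is (v)$\Rightarrow$(vi) together with (vi)$\Leftrightarrow$(vii). Under (v), since cyclic acts are indecomposable, every cyclic right $S$-act is injective; in particular $S_S$ is injective and contains a zero element. The injectivity of every cyclic act, combined with the analogous characterization for InC-injectivity obtained at the end of Section~2 (itself an adaptation of \cite[Theorem 4.5.13]{kilp}), forces $S$ to be a regular principal right ideal monoid whose idempotents are all special, and the zero element of $S_S$ together with this idempotent structure yields the right zero required by (vii). The equivalence (vi)$\Leftrightarrow$(vii) is itself \cite[Theorem 4.5.13]{kilp}. Once (vii) holds, the right zero lies in every principal right ideal, so $S$ is left reversible, and Proposition~\ref{pr4} ensures that coproducts of injective acts are injective. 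Applied to the decomposition of an arbitrary act into indecomposable (hence, by (v), injective) subacts, this yields (vi); finally (vi)$\Rightarrow$(i) closes the cycle.

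The principal obstacle is (v)$\Rightarrow$(vi): upgrading injectivity from the indecomposable class to all acts. The key is that (v) already contains enough data---the injectivity of every cyclic act plus a zero in $S_S$---to trigger the Kilp characterization and extract a right zero of $S$; left reversibility then follows, and Proposition~\ref{pr4}'s coproduct preservation of injectivity reduces the arbitrary-act case to the indecomposable case handled by (v). The algebraic identification (vii), in particular the special-idempotent generation of right ideals, is imported from \cite[Theorem 4.5.13]{kilp} rather than reproved.
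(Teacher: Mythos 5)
Your overall cycle $(vi)\Rightarrow(i)\Rightarrow(ii)\Rightarrow(iv)\Rightarrow(v)\Rightarrow(vi)$, with $(i)\Rightarrow(iii)\Rightarrow(iv)$ and $(vi)\Leftrightarrow(vii)$ attached, is logically sound in shape, and the routine implications are fine (InD-injectivity does imply PInD-injectivity, and $(iv)\Rightarrow(v)$ is exactly the proposition that indecomposable PInD-injective acts are injective). The endgame you describe -- left reversibility gives preservation of injectivity under coproducts via Proposition \ref{pr4}, which upgrades (v) to (vi) by decomposing an arbitrary act into its indecomposable components -- is also exactly how the paper finishes. The problem is how you reach left reversibility from (v).

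Your route is: (v) gives injectivity of all cyclic acts, and then "the analogous characterization for InC-injectivity at the end of Section 2" (or \cite[Theorem 4.5.13]{kilp}) forces $S$ to be a regular principal right ideal monoid with special idempotents, whence (vii). This step does not go through. The theorem at the end of Section 2 has as its hypothesis that \emph{all} right $S$-acts are InC-injective; condition (v) only yields that all \emph{indecomposable} acts are injective (hence InC-injective), and by Theorem \ref{InC-inj+0=inj} the InC-injectivity of a decomposable act $Q$ amounts to injectivity of $Q^{\theta}$, which (v) does not provide. So the characterization cannot be invoked. Nor can you simply read off "all cyclic acts injective $\Rightarrow$ absolutely injective" from the cited Kilp theorem: that reduction is precisely what is at stake, and the paper does not have it available (if it were a one-line citation, the authors would not need the argument they actually give). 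A further soft spot: the zero of the injective act $S_S$ is a \emph{left} zero of $S$ ($zs=z$), not the \emph{right} zero demanded by (vii), and a left zero does not imply left reversibility; converting one into the other is a genuine step you have not supplied. The paper instead proves left reversibility directly from (v) by a case split on products of indecomposable acts: if every product of indecomposable acts is indecomposable, left reversibility follows from a cited external result; otherwise some product $\prod_{i}A_i$ of indecomposable (hence, by (v), injective) acts is a decomposable injective act, and Proposition \ref{pr4} ($(iii)\Rightarrow(iv)$ there) gives left reversibility. The equivalence $(vi)\Leftrightarrow(vii)$ is then simply quoted from Skornjakov rather than rederived. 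You would need to either reproduce an argument of this kind or supply an independent proof that (v) forces left reversibility; as written, the central implication $(v)\Rightarrow(vi)$ is not established.
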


\begin{proof}
The implications (i)$\Longrightarrow$(ii) and (iii)$\Longrightarrow$(iv) are clear and (iv)$\Longrightarrow$(v) follows from Proposition \ref{Ind P.InD-injective is  injective}. The implications (ii)$\Longrightarrow$(iii) and (v)$\Longrightarrow$(i) follows by adopting the proofs of  (ii)$\Longrightarrow$(iii) and (vi)$\Longrightarrow$(i) respectively in Theorem \ref{homological classification of InD-injective ideals}. The equivalence of (vi) and (vii) is \cite[Theorem 2]{Skornjakov69}. Now we show that (v) and (vi) are equivalent. The implication (vi)$\Longrightarrow$(v) is clear. For the converse, we claim that $S$ is left reversible. Two cases may occur.

\textbf{Case 1)} All products of indecomposable acts are indecomposable: Then by \cite[Proposition 3.4]{Mojtaba 7th},  $S$ is left reversible.

\textbf{Case 2)} There exists a family of indecomposable right $S-$acts, $\{ A_i \mid i\in I\}$ such that $A=\prod_{i\in I}A_i$ is decomposable: Then $A$ is injective by assumption. So Proposition \ref{pr4} implies that $S$ is left reversible.

Now every right $S-$act is the coproduct of its indecomposable components all of which are injective by assumption and is injective, since $S$ is left reversible.
\end{proof}


\begin{remark}
Note that Theorem \ref{completely InD-injective monoid} states that for a monoid $S$ if all indecomposable right $S-$acts are PInD-injective then all  right $S-$acts are injective. Therefore to check that $S$ is right absolutely injective it suffices to check the PInD-injectivity only for all indecomposable right $S-$acts, that is, in diagram \textbf{(I)} for injectivity we need only $A$ and $C$ to be indecomposable and $f$ to be a monomorphism.
\end{remark}


\begin{thebibliography}{10}

\bibitem{clifford} Clifford, A. H., Preston, G. B., {\emph{The algebraic theory of semigroups, Vols. 1,2}}, Math. Surveys, No. 7, Amer. Math. Soc. Providence, R. I., 1961.
\bibitem{Ebrahimi} Ebrahimi, M. M.,  Mahmoudi, M., Shahbaz, L., Proper Behaviour of Sequential Injectivity of Acts Over Semigroups, Comm. Algebra., \textbf{37} (7) (2009), pp. 2511-2521.
\bibitem{Feller1} Feller, E. H., Gantos, R. L., Completely injective semigroups with central idempotents, Glasgow Math. J.,  \textbf{10} (1969), pp. 16-20.
\bibitem{Feller2} Feller, E. H., Gantos, R. L., Completely injective semigroups, Pac. J. Math., \textbf{31} (1969), pp. 359-66.
\bibitem{Feller3} Feller, E. H., Gantos, R. L., Completely right injective semigroups that are unions of groups, Glasgow Math. J., \textbf{12} (1971), pp. 43-49.
\bibitem{Foun} Fountain, J. B., Completely right injective semigroups, Proc. London Math. Soc., \textbf{28}  (1974), pp. 28-44.
\bibitem{Gould} Gould, V., The characterisation of monoids by properties of their $S$-systems, Semigroup forum, \textbf{32} (1985), pp. 251-265.
\bibitem{kilp}  Kilp, M., Knauer, U., Mikhalev,  A. V., {\emph{Monoids, Acts and Categories}}, Walter de Gruyter, Berlin, New York, 2000.
\bibitem{Ren} Renshaw, J., Monoids for which Condition (P) acts are projective, Semigroup forum, \textbf{61}  (2000), pp. 45-56.
\bibitem{Mojtaba 7th} {Sedaghatjoo, M., Khaksari, A.}, Some classifications of monoids by indecomposable property, submitted for publication.
\bibitem{Shoji} Shoji, K., Completely right injective semigroups, Math. Japonica., \textbf{24} (1980), pp. 609-615.
\bibitem{Skornjakov69}  Skornjakov, L. A., On homological classification of monoids  (in Russian), Sib. Math. J., \textbf{10} (1969), pp. 1139 1143., correction ibid. \textbf{12} (1971) p. 689.
\bibitem{Zhang} {Zhang, X., Knauer, U., Chen, Y.}, Classification of monoids by injectivities I. C-injectivity, Semigroup forum, \textbf{76}  (2008), pp. 169-176.
\bibitem{Zhang2} {Zhang, X., Knauer, U., Chen, Y.}, Classification of monoids by injectivities II. CC-injectivity, Semigroup forum, \textbf{76} (2008), PP. 177-184.
\end{thebibliography}
\end{document}